\newtheorem{definition}{Definition}
\newtheorem{theorem}[definition]{Theorem}
\newtheorem{proposition}[definition]{Proposition}
\newtheorem{lemma}[definition]{Lemma}
\newtheorem{assumption}[definition]{Assumption}
\newtheorem{remark}[definition]{Remark}
\begin{document}

\title[Analysis and numerics for nonlinear PDAEs arising in technical textile industry]
{On a nonlinear partial differential algebraic system arising in technical textile industry:\\ Analysis and numerics}

\author[M. Grothaus]{Martin Grothaus$^{1,\star}$}
\author[N. Marheineke]{Nicole Marheineke$^2$}

\date{\today\\
$^\star$ \emph{Corresponding author}, email: grothaus@mathematik.uni-kl.de\\
$^1$ TU Kaiserslautern, Fachbereich Mathematik, D-67653 Kaiserslautern, Germany\\
$^2$ FAU Erlangen-N\"urnberg, Lehrstuhl Angewandte Mathematik I, Cauerstr.\ 11, D-91058 Erlangen, Germany}

\begin{abstract}
In this paper we explore a numerical scheme for a nonlinear fourth order system of partial differential algebraic equations that describes the dynamics of slender inextensible elastica as they arise in the technical textile industry. Applying a semi-discretization in time, the resulting sequence of nonlinear elliptic systems with the algebraic constraint for the local length preservation is reformulated as constrained optimization problems in a Hilbert space setting that admit a solution at each time level. Stability and convergence of the scheme are proved. The numerical realization is based on a finite element discretization in space. The simulation results confirm the analytically predicted properties of the scheme.
\end{abstract}

\maketitle

\quad\\
\textsc{AMS-Classification} 35J74; 58J05; 65K10; 65M12; 65M20; 65M60; 74K10\\
\textsc{Keywords} numerical scheme; stability; convergence; semi-discretization; constrained optimization; finite elements; elastic fiber dynamics

%%%%%%%%%%%%%%%%%%%%%%%%
\section{Introduction}

The numerical simulation and optimization of the dynamics of thin long elastic fibers are of great importance in the technical textile industry  (e.g.\ in production processes of yarns or non-woven materials \cite{pearson:b:1985, klar:p:2009}), but the application ranges much further and comprises also, among others, biomolecular science (DNA, bacterial fibers \cite{mesirov:b:1996}) and computer graphics (hair modeling \cite{bertails:p:2006}).
In the slender-body theory \cite{antman:b:2006} a fiber can be asymptotically described by an arc-length parameterized, time-dependent curve $\mathbf{r}$ representing its center-line. Then, its dynamics can be modeled by a system of nonlinear partial differential equations  \cite{marheineke:p:2006}
\begin{align}\label{eq:intro}
\omega \,\partial_{tt} \mathbf{r} &= \partial_s(\lambda \partial_s \mathbf{r})-b \, \partial_{ssss}\mathbf{r} + \mathbf{f}, \qquad \qquad |\partial_s \mathbf{r}(s,t)|^2=1. 
\end{align}
The arc-length constraint enforces inextensibility and turns the inner traction $\lambda$ to an unknown, i.e.\ Lagrange multiplier. The system for $(\mathbf{r}, \lambda) $ has a wave-like character due to inertia (line weight $\omega$) with an elliptic regularization coming from the bending stiffness $b$. It can be considered as a reformulation of the Kirchhoff-Love equations for an elastic rod \cite{landau:b:1970}. For rigorous derivations of such inextensible Kirchhoff beam models from three-dimensional hyper-elasticity see e.g.\ \cite{coleman:p:1993, mora:p:2003}. In non-woven manufacturing the studies of fiber lay-down processes, their longtime behavior and the resulting fabric quality require a fast and accurate numerical treatment, \cite{klar:p:2009, bonilla:p:2007, grothaus:p:2008}. Also nonlinear or even stochastic source terms $\mathbf{f}$ due to aerodynamics might play a role, see \cite{marheineke:p:2011} and Figure~\ref{fig:intro}. So far, the used approaches were mainly addressed to high-speed performance without any theoretical results on convergence or length conservation.

Elastic flows of curves in different model variants were topic of analytical \cite{dziuk:p:2002, oelz:p:2011} and numerical \cite{deckelnick:p:2009, barrett:p:2011, bartels:p:2013} investigations. Considering a global length constraint, an error analysis for a semi-discrete scheme in space was performed in \cite{deckelnick:p:2009}, a fully implicit finite element method with equidistribution properties was explored in \cite{barrett:p:2011}. The work \cite{bartels:p:2013} presented a scheme for an arc-length parameterized curve whose dynamics is caused by bending and friction, neglecting inertia. This is a model system quite similar to \eqref{eq:intro} in the spatial terms, but first order in time and dissipative. The nonlinear point-wise constraint of the local length preservation was handled by a linearization around a previous solution in each time step which led to a sequence of linear saddle-point problems. We adapt this idea to our problem. 

This work aims at the development of a numerical scheme for \eqref{eq:intro} with focus on analytical and computational aspects.
We propose a semi-discretization in time. Following the concept of \cite{juengel:p:2001} and employing a horizontal line method, we replace the transient problem by a sequence of elliptic systems that are handled in their weak formulation in terms of the Lagrange formalism. The algebraic constraint is incorporated in a linearized form in the definition of the optimization domain such that we study the solvability of a constrained minimization problem in a Hilbert space setting \cite{hinze:b:2009, troeltzsch:b:2010}. We prove the existence of the minimizer and of the Lagrange multiplier on each time level. Stability estimates on the discrete solution and the Lagrange multiplier result then in the convergence of the numerical scheme as the time step goes to zero, $\tau\rightarrow 0$ (Theorem~\ref{th:onconv}). In the limit the arc-length constraint is fulfilled. In addition, we derive an explicit error bound of order $\mathcal{O}(\sqrt{\tau})$ on the discrete fiber elongation (Proposition~\ref{estimateconstraint}). Numerically, we solve the optimization problems in finite element spaces. The finite dimensional approximation of the constraint determines the accuracy and efficiency of the scheme. 

The paper is structured as follows. Proceeding from the model system for the inextensible inertial fiber, we present the numerical scheme in Section~\ref{sec:2}. In Section~\ref{sec:3} we deal with its theoretical analysis, regarding existence, stability and convergence. The numerical realization is discussed in Section~\ref{sec:4}. The simulation results illustrate the qualitative behavior of the fiber dynamics and confirm the analytically predicted properties. We conclude with a summary and an outlook.

\begin{figure}[tb]
\vspace*{0.6cm}
\includegraphics[scale=0.65]{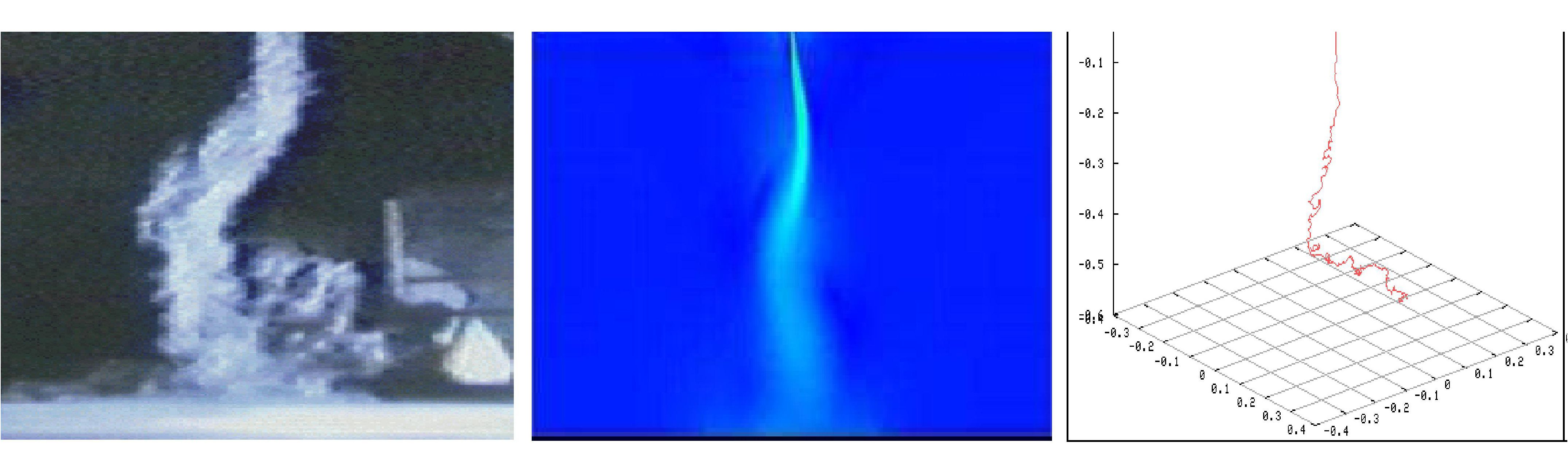}
\caption{\label{fig:intro} {Application: melt-spinning process of non-woven materials. \emph{From left to right:} Turbulent air flow in process (photo by industrial partner), mean velocity flow field, turbulence effects on immersed fiber modeled by stochastic forces (source terms) in  \eqref{eq:intro}, \cite{marheineke:p:2007}.}}
\end{figure}

%%%%%%%%%%%%%%%%%%%%%%%%%%%%%%%%%%%%%%%%%%%%
\section{Numerical scheme for the fiber model}\label{sec:2}
\setcounter{equation}{0}

\subsection{Model}
A fiber is characterized by its long slender geometry. According to the special Cosserat theory \cite{antman:b:2006} it can be asymptotically represented by its arc-length parameterized time-dependent center-line $\mathbf{r}:\Omega=\Omega_L\times \Omega_T \rightarrow \mathbb{R}^3$, where $\Omega_a:=(0,a)$, $a\in(0,\infty)$ with fiber length $L$ and end time $T$. Since extension and shear are here negligibly small in comparison to bending, the dynamics of an homogeneous inertial elastic fiber can be described by a wave-like system of fourth order with constraint 
\begin{align}\label{eq:1}
\omega \,\partial_{tt} \mathbf{r}(s,t) &= \partial_s(\lambda(s,t) \, \partial_s \mathbf{r}(s,t))-b \, \partial_{ssss}\mathbf{r}(s,t) + \mathbf{f}[\mathbf{r},\partial_t\mathbf{r},\partial_s\mathbf{r},s,t], \qquad 
|\partial_s \mathbf{r}(s,t)|^2=1,
\end{align}
where $\omega >0 $ denotes the line weight. The dynamics is caused by the acting inner and outer forces (Newton's law).  The inner force densities stem from bending with bending stiffness $b>0$ as well as from traction $\lambda$. The inner traction $\lambda: \Omega \rightarrow \mathbb{R}$ acts particularly as Lagrange multiplier to the nonlinear point-wise constraint that is expressed in the Euclidian norm $|\cdot|$ and ensures the arc-length parameterization for all times. It enforces the local inextensibilty and hence the global conservation of length.
The neglect of torsion, i.e.\ $\kappa=0$, in the model is justified by respective boundary conditions, such as a free ending or torsion-free clamping. Its inclusion would yield an extra term $\kappa(\partial_s\mathbf{r} \times \partial_{sss}\mathbf{r})$ in the system and the associated equation $\partial_s \kappa=0$, cf.\ \cite{klar:p:2009}. The system~\eqref{eq:1} is a reformulation of the Kirchhoff-Love equations, for details on its derivation we refer to \cite{marheineke:p:2006}. As far as we know there are no existence results for \eqref{eq:1}. The Kirchhoff-Love equations are the limit system of an elastic Euler-Bernoulli rod, as the slenderness parameter (ratio between fiber diameter and length) and the Mach number (ratio between fiber velocity and speed of sound) approach zero \cite{baus:p:2015}. Depending on the application, the outer force densities $\mathbf{f}$ might come for example from gravity, friction or aerodynamics. In case of a linear force in $\partial_t \mathbf{r}$ (e.g.\ friction) and negligible inertia effects, the system~\eqref{eq:1} reduces to an evolution equation (first order in time) with constraint that was subject of research in \cite{oelz:p:2011, bartels:p:2013}. In non-woven manufacturing stochastic effects due to turbulent air flows are important, which implies space-time white noise as driving forces \cite{marheineke:p:2011, marheineke:p:2007} (cf.\ Figure~\ref{fig:intro}). For a study on extensible stochastic beam equations (without constraint) see e.g.\ \cite{brzezniak:p:2005, baur:p:2013}. 

In this work we restrict to sufficiently smooth outer forces that are independent of the fiber curve, like for example gravity. We consider a set-up where a fiber fixed at one ending is freely swinging. Initially, it is assumed to be free of stress and to rest in a straight position. So, the following initial conditions as well as Dirichlet and Neumann boundary conditions for the clamped ($s=L$) and stress-free ($s=0$) fiber ending close \eqref{eq:1} to an initial boundary value problem
\begin{equation}\label{eq:1bd}
\begin{aligned}
\mathbf{r}(s,0)&=(L-s)\mathbf{e_g}, \hspace*{3.2cm} \partial_t \mathbf{r}(s,0)=\mathbf{0}\\
\mathbf{r}(L,t)& = \mathbf{0}, \hspace*{4.4cm} \partial_s \mathbf{r}(L,t)=-\mathbf{e_g}  \\
\partial_{ss}\mathbf{r}(0,t)&= \mathbf{0}, \hspace*{2.5cm}  (b\partial_{sss}\mathbf{r}-\lambda \partial_s\mathbf{r})(0,t)=\mathbf{0}
\end{aligned}
\end{equation}
with the normalized direction vector $\mathbf{e_g}$, $|\mathbf{e_g}|=1$. The natural boundary conditions are equivalent to $\partial_{ss}\mathbf{r}(0,t)=\partial_{sss}\mathbf{r}(0,t)=\mathbf{0}$ and $\lambda(0,t)=0$ under the constraint. Moreover, the inner traction force might consistently satisfy $\lambda(s,0)=0$ for a stress-free initial configuration. This set-up of a cantilever beam reminds on hair modeling in computer graphics. In view of applications in technical textile industry it is a simplification, but it still contains the major mathematical difficulty, i.e.\ the partial differential-algebraic structure of the model equations. 

\begin{assumption}
Let $\mathbf{f}:\overline{\Omega} \to \mathbb{R}^3$ be a continuous function, $0<\omega, b < \infty $ be constants and $\mathbf{e_g}\in\mathbb{R}^3$ be a unit vector for the forthcoming investigations of \eqref{eq:1}-\eqref{eq:1bd}.
\end{assumption}

%%%%%%%%%%%%%%%
\subsection{Semi-discretization} We propose a numerical scheme based on a semi-implicit semi-discreti\-zation. Employing a horizontal line method (Rothe method) in time, we replace the transient problem by a sequence of elliptic systems. The nonlinear arc-length constraint is incorporated in a linearized version.

Let $T\in(0,\infty)$ be given. We divide the time interval $\overline{\Omega}_T=[0,T]$ into $N$ subintervals by introducing the temporal mesh $\{t_k \,|\, k=0,...,N\}$ where $t_k=k\tau$ is prescribed by the time step $\tau=T/N$.  Using an implicit Euler scheme, we discretize the system \eqref{eq:1}-\eqref{eq:1bd} as
\begin{align*}
\frac{\omega}{\tau^2} (\mathbf{r}_{k+1} - 2\mathbf{r}_{k} +\mathbf{r}_{k-1}) 
  & = \partial_s(\lambda_{k+1} \partial_s \mathbf{r}_{k+1})-b\partial_{ssss}\mathbf{r}_{k+1}+\mathbf{f}_{k+1},\qquad \qquad
|\partial_s\mathbf{r}_{k+1}|^2 = 1, 
\end{align*}
with $\mathbf{ f}_{k+1} = \mathbf{ f}(t_{k+1})$, $k=1, \ldots ,N-1$, $\mathbf{ r}_0=(L-s)\mathbf{ e_g}$ and $\mathbf{ r}_1=\mathbf{ r}_0$.
The implicit time discretization requires consequently the recursive solving of nonlinear constrained elliptic systems in one space dimension.
As we will show, it is sufficient to consider the constraint as $\partial_t |\partial_s \mathbf{r}_{k+1}|^2=0$ and express it in terms of the linearization around the solution associated with the previous time step. This yields 
\begin{subequations}\label{eq:2}
\begin{align}\label{eq:ode2}
\frac{\omega}{\tau^2} (\mathbf{ r}_{k+1} - 2\mathbf{ r}_{k} +\mathbf{ r}_{k-1}) 
  & = \partial_s(\lambda_{k+1} \partial_s \mathbf{ r}_{k})-b\partial_{ssss}\mathbf{ r}_{k+1}+\mathbf{ f}_{k+1}\\\label{eq:con}
(\partial_s\mathbf{ r}_{k+1}-\partial_s\mathbf{r}_k)\cdot \partial_s\mathbf{r}_k &= 0. 
\end{align}
\end{subequations}
The approximate solution to \eqref{eq:1} is then given by the linear interpolation $(\mathbf{ r}^{\tau},\lambda^{\tau})$, i.e.,
\begin{align*}
\mathbf{ r}^\tau(s,t)&=\frac{t-t_{k-1}}{\tau} (\mathbf{ r}_k-\mathbf{ r}_{k-1}) +\mathbf{ r}_{k-1},
&& s \in \Omega_L, \quad t\in(t_{k-1},t_k], \quad k=1, \ldots, N, 
\end{align*}
and correspondingly for $\lambda^{\tau}$ which we extend $\lambda_0 = \lambda_1 = 0$ in view of a stress-free initial solution. For functions defined on $[0,T]$, in turn, a subindex $k \in \{0, \ldots, N\}$ corresponds to the value of the function at time $t_k$. The discretized system \eqref{eq:2} can be identified as Euler-Lagrange equations corresponding to an appropriate Lagrange functional, such that we explore its solvability as a variational problem. It will turn out that in the limit $\tau \rightarrow 0$ the system \eqref{eq:1} is fulfilled.

%%%%%%%%%%%%%%%%%%%%%%%%%%%%%%%%%%%%%%%
\section{Theoretical analysis}\label{sec:3}
\setcounter{equation}{0}

In this section we handle the sequence of elliptic systems in their weak formulation in terms of the Lagrange formalism. The constraint is incorporated in the definition of the optimization domain such that we study the solvability of a constrained minimization problem. In particular, we show the existence of the minimizer and of the Lagrange multiplier on each time level. Stability estimates on the discrete solution and the Lagrange multiplier result then in a convergence proof for the numerical scheme.

%%%%%%
\subsection{Solvability of the discretized system}

The norm of a Banach space $\mathcal{B}$ we denote by $\|\cdot\|_\mathcal{B}$ and the dual pairing with its dual space $\mathcal{B}'$ by $_{\mathcal{B}}{\langle\cdot,\cdot\rangle}_{\mathcal{B}'}$. If $\mathcal{B}$ even is a Hilbert space, then its inner product we denote by $(\cdot, \cdot)_\mathcal{B}$. By $\mathcal{L}^2(S; {\mathbb R}^n)$, $S \subset {\mathbb R}^d$ Lebesgue measurable, $n, d \in {\mathbb N}$, we denote the Hilbert space of (equivalence classes of) square integrable functions on $S$ w.r.t.\ the Lebesgue measure taking values in ${\mathbb R}^n$. The space $\mathcal{C}^0(S; \mathcal{B})$ of continuous functions on compact $S$ with values in $\mathcal{B}$ we consider to be equipped with the norm of uniform convergence. We use the notation $\mathcal{W}^{m,p}(U; {\mathbb R}^n)$, $U \subset {\mathbb R}^d$ open, $m\in [0,\infty)$, $p\in [1,\infty]$, for Sobolev spaces as in \cite{adams:b:1990}. In case of $p=2$, the Hilbert space $\mathcal{W}^{m,2}(U; {\mathbb R}^n)$ is abbreviated by $\mathcal{H}^m(U; {\mathbb R}^n)$. In the case $n=1$ we suppress the range of function spaces. 
In particular, we introduce the notation 
\begin{align*}
\mathcal{H}^m_{0,a}(\Omega_a; {\mathbb R}^n):=\{{\mathbf v}\in \mathcal{H}^m(\Omega_a; {\mathbb R}^n) \,|\, \partial_s^\alpha{\mathbf v}(a)={\mathbf 0} \mbox{ for all } \alpha \in \mathbb{N}_0, \alpha + 1/2 < m\}, \quad \Omega_a = (0,a),  
\end{align*}
$a\in (0,\infty)$.
Of course, $\mathcal{H}^m_{0,a}(\Omega_a; {\mathbb R}^n)$ equipped with the norm of $\mathcal{H}^m(\Omega_a; {\mathbb R}^n)$ is a Hilbert space. Its dual space $(\mathcal{H}_{0,a}^m(\Omega_a;{\mathbb R}^n))'$ we denote by $\mathcal{H}^{-m}(\Omega_a;{\mathbb R}^n)$. Recall that $\mathcal{H}^m(\Omega_a; {\mathbb R}^n)$ is embedded continuously and compactly in the H\"{o}lder spaces $\mathcal{C}^{k, \gamma}([0,a];{\mathbb R}^n)$ for $m > 1/2 + k + \gamma$, $k \in {\mathbb N}_0$, $0 \le \gamma \le 1$, see e.g.\ \cite{adams:b:1990}. We always, via the Riesz representation theorem, identify spaces of square integrable functions with their dual space and consider an embedding of Sobolev spaces in the sense of Gelfand triples with the space of square integrable functions as central space.

We define the affine linear fiber space
\begin{align*} 
\mathcal{V}:=\{{\mathbf v}\in {\mathbf v}_D + \mathcal{H}^2_{0,L}(\Omega_L;\mathbb{R}^3) \,|\,\,{\mathbf v}_D \in \mathcal{H}^2(\Omega_L;\mathbb{R}^3),\, {\mathbf v}_D(L)={\mathbf 0}, \, \partial_s{\mathbf v}_D(L)=-\mathbf{e_g}\}
\end{align*}
and introduce the constraint associated functional
\begin{align}\label{def:e}
e_{k+1}: \mathcal{V}\rightarrow \mathcal{H}_{0,L}^1(\Omega_L), \quad e_{k+1}({\mathbf v})=2\partial_s({\mathbf v}-{\mathbf r}_k)\cdot\partial_s{\mathbf r}_k = 0.
\end{align}
Moreover, we deduce the cost functionals $J_{k+1}:\mathcal{V}\rightarrow {\mathbb{R}}$
\begin{align}\label{def:J}
J_{k+1}({\mathbf v}) = {\omega} \left\|\tau {\mathrm D}_{k+1}^2 {\mathbf v}\right\|^2_{\mathcal{L}^2(\Omega_L)} + {b}\, \|\partial_{ss}{\mathbf v}\|^2_{\mathcal{L}^2(\Omega_L)} 
-2 \,({\mathbf f}_{k+1}, {\mathbf v} )_{\mathcal{L}^2(\Omega_L)}
\end{align} 
with the second temporal difference ${\mathrm D}_{k+1}^2{\mathbf v} = ({\mathbf v}-2{\mathbf r}_k+{\mathbf r}_{k-1})/\tau^2$  by applying variational calculus on \eqref{eq:ode2} for $k=1,...,N-1$.

\quad\\
\textbf{Lagrange formalism.} {\it 
For $k=1,...,N-1$, let $J_{k+1}$ be the cost functional of \eqref{def:J} and $e_{k+1}$ the constraint functional of \eqref{def:e}. Define the Lagrange functional
  $L_{k+1}:\mathcal{V}\times\mathcal{H}^{-1}(\Omega_L)\rightarrow \mathbb{R}$ by}
\begin{align*}
 L_{k+1}({\mathbf v},\lambda)= J_{k+1}({\mathbf v}) + {}_{\mathcal{H}_{0,L}^1(\Omega_L)}{\!\langle e_{k+1}({\mathbf v}),\lambda\rangle}_{\mathcal{H}^{-1}(\Omega_L)}.
\end{align*}
{\it Then, a stationary point of the Lagrange functional is a weak solution of the fiber system \eqref{eq:2}.}

\quad\\
A stationary point of the Lagrange functional satisfies the adjoint problem \eqref{eq:ad} for all test functions
$\eta\in\mathcal{H}^{-1}(\Omega_L)$ and $\boldsymbol{\phi}\in \mathcal{H}^2_{0,L}(\Omega_L;\mathbb{R}^3)$, i.e.
\begin{subequations}\label{eq:ad}
\begin{align}\label{eq:adj1}
\partial_\lambda L_{k+1}({\mathbf v},\lambda)[\eta] =0 
&= {}_{\mathcal{H}_{0,L}^1(\Omega_L)}{\!\langle e_{k+1}({\mathbf v}),\eta\rangle}_{\mathcal{H}^{-1}(\Omega_L)}\\\label{eq:adj2}
\nabla_{\mathbf v} L_{k+1}({\mathbf v},\lambda) [\boldsymbol{\phi}] = 0 
&= J_{k+1}'({\mathbf v})[\boldsymbol{ \phi}] + {}_{\mathcal{H}_{0,L}^1(\Omega_L)}{\!\langle e_{k+1}'({\mathbf v})[\boldsymbol{\phi}],\lambda\rangle}_{\mathcal{H}^{-1}(\Omega_L)} \\ \nonumber
&= 2 \, \big( \omega \,( {\mathrm D}_{k+1}^2 {\mathbf v},\boldsymbol{\phi} )_{\mathcal{L}^2(\Omega_L)} 
+b \,( \partial_{ss}{\mathbf v}, \partial_{ss} \boldsymbol{\phi})_{\mathcal{L}^2(\Omega_L)} -({\mathbf f}_{k+1}, \boldsymbol{\phi})_{\mathcal{L}^2(\Omega_L)} \\\nonumber
& \quad \quad 
+  {}_{\mathcal{H}_{0,L}^1(\Omega_L)}{\!\langle \partial_{s}{\mathbf r}_k \cdot\partial_{s}\boldsymbol{\phi},\lambda\rangle}_{\mathcal{H}^{-1}(\Omega_L)}\big).
\end{align}
\end{subequations}
Presupposing sufficient regularity of the Lagrange multiplier $\lambda$, the duality pairing ${}_{\mathcal{H}_{0,L}^1(\Omega_L)}{\!\langle\cdot,\cdot\rangle}_{\mathcal{H}^{-1}(\Omega_L)}$ coincides with $(\cdot,\cdot)_{\mathcal{L}^2(\Omega_L)}$ in the sense of a Gelfand triple. This yields the Euler-Lagrange equations to \eqref{eq:2}. Hence, the weak solvability of the fiber system \eqref{eq:2} can be formulated as

\quad\\
\textbf{Constrained minimization problem}
\begin{align}\label{def:mini}
\textit{Minimize }J_{k+1}\textit{ over the domain } \mathcal{K}_{k+1}:=\{{\bf v} \in \mathcal{V}
\,|\,e_{k+1}({\bf v})=0 \}.
\end{align}

\begin{lemma}[Properties of cost functional]\label{lem:1}
For $k=1,...,N-1$, the cost functional $J_{k+1}:\mathcal{V}\rightarrow{\mathbb R}$ defined in \eqref{def:J} is strictly convex, coercive and weakly lower semi-continuous. The minimization domain $\mathcal{K}_{k+1}$ is closed and convex and, in particular, weakly closed.
\end{lemma}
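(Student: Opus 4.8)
The plan is to decompose $J_{k+1}$ into two nonnegative quadratic contributions plus an affine term and to exploit the affine structure of $V$. Writing $\mathbf{v}=\mathbf{v}_D+\mathbf{w}$ with $\mathbf{w}\in H^2_{0,l}(\Omega_l;\mathbb{R}^3)$ and setting $\mathbf{c}=2\mathbf{r}_k-\mathbf{r}_{k-1}$, one has $\tau\,\mathrm{D}_{k+1}^2\mathbf{v}=(\mathbf{v}-\mathbf{c})/\tau$, so that $J_{k+1}(\mathbf{v})=(\omega/\tau^2)\|\mathbf{v}-\mathbf{c}\|^2_{L^2(\Omega_l)}+b\,\|\partial_{ss}\mathbf{v}\|^2_{L^2(\Omega_l)}-2(\mathbf{f}_{k+1},\mathbf{v})_{L^2(\Omega_l)}$. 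For strict convexity I would use that the temporal term, being the squared $L^2$-distance to the fixed element $\mathbf{c}$ in the inner-product space $L^2(\Omega_l;\mathbb{R}^3)$, is strictly convex, and remains so on $H^2$ since distinct elements of $H^2$ stay distinct in $L^2$; the bending term is convex as the square of a seminorm, and the last term is affine. A sum of a strictly convex, a convex and an affine functional is strictly convex, the strictness being supplied by the temporal difference (the bending contribution alone only being a seminorm).

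For coercivity the crucial ingredient is a Poincar\'e-type inequality on $H^2_{0,l}(\Omega_l;\mathbb{R}^3)$. Since $\mathbf{w}(l)=\mathbf{0}$ and $\partial_s\mathbf{w}(l)=\mathbf{0}$, two applications of the one-dimensional Poincar\'e inequality give $\|\mathbf{w}\|_{L^2}\le C\|\partial_s\mathbf{w}\|_{L^2}\le C'\|\partial_{ss}\mathbf{w}\|_{L^2}$, so that $\|\partial_{ss}\cdot\|_{L^2(\Omega_l)}$ is a norm on $H^2_{0,l}(\Omega_l;\mathbb{R}^3)$ equivalent to $\|\cdot\|_{H^2}$. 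Absorbing the fixed contribution of $\mathbf{v}_D$, the bending term then grows quadratically in $\|\mathbf{w}\|_{H^2}$, whereas the linear term is bounded below by $-2\|\mathbf{f}_{k+1}\|_{L^2}\|\mathbf{v}\|_{L^2}$ and grows at most linearly; a completion-of-squares (Young) estimate yields $J_{k+1}(\mathbf{v})\to+\infty$ as $\|\mathbf{v}\|_{H^2}\to\infty$. Weak lower semicontinuity is then immediate: $J_{k+1}$ is convex and strongly continuous on $V$, hence weakly lower semicontinuous by the standard Mazur argument.

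For the domain $K_{k+1}$ I would treat the two defining conditions separately. The constraint $e_{k+1}(\mathbf{v})=0$ is affine and continuous from $H^2$ into $H^1_{0,l}(\Omega_l)$ --- here one uses that $H^1(\Omega_l)$ is a Banach algebra in one space dimension to control the product $\partial_s\mathbf{v}\cdot\partial_s\mathbf{r}_k$, and that $\partial_s(\mathbf{v}-\mathbf{r}_k)(l)=\mathbf{0}$ so that the image indeed vanishes at $s=l$. Thus $\{\,\mathbf{v}\in V : e_{k+1}(\mathbf{v})=0\,\}$ is a closed affine subspace, in particular convex. The pointwise bound $|\partial_s\mathbf{r}_k-\partial_s\mathbf{v}|\le\tau^2$ defines a convex set by the triangle inequality, and it is strongly closed because the embedding $H^2(\Omega_l)\hookrightarrow C^1([0,l])$ turns $H^2$-convergence into uniform convergence of the first derivatives, letting the pointwise inequality pass to the limit. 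The intersection $K_{k+1}$ is therefore convex and strongly closed, hence weakly closed by Mazur's theorem.

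I expect the coercivity step to be the main obstacle, specifically the verification of the Poincar\'e-type norm equivalence on $H^2_{0,l}(\Omega_l;\mathbb{R}^3)$ from the clamped boundary data at $s=l$, and the careful bookkeeping of the affine shift $\mathbf{v}_D$ so that the quadratic bending term genuinely dominates the linear source contribution. The properties of $K_{k+1}$ are comparatively routine once the embedding $H^2\hookrightarrow C^1$ and the one-dimensional Banach-algebra structure of $H^1$ are available.
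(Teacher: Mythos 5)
Your proposal is correct and takes essentially the same route as the paper: strict convexity from the quadratic structure (the paper via the direct identity $\mu J(\mathbf{u})+(1-\mu)J(\mathbf{v})-J(\mu\mathbf{u}+(1-\mu)\mathbf{v})=(\mu-\mu^2)(\omega\|\tau\mathrm{D}^2\mathbf{u}-\tau\mathrm{D}^2\mathbf{v}\|^2_{L^2}+b\|\partial_{ss}\mathbf{u}-\partial_{ss}\mathbf{v}\|^2_{L^2})$, you via decomposition, with the strictness in both cases supplied by the temporal $L^2$-term), coercivity via the Poincar\'e inequality induced by the clamped data at $s=l$, weak lower semicontinuity (the paper directly from weak lsc of the norm and weak continuity of the linear term, you from convexity plus strong continuity --- both standard), and convexity plus strong closedness of $K_{k+1}$ upgraded to weak closedness. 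Your extra details --- the Banach-algebra structure of $H^1$ in one dimension for the continuity of $e_{k+1}$ and the embedding $H^2\hookrightarrow C^1$ for the closedness of the pointwise constraint set --- are correct fillings-in of steps the paper leaves implicit.
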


\begin{proof}
Here and throughout the following proofs where is no danger of confusion, we suppress the indices indicating the time levels for a simpler notation. Let ${\mathbf u}$, ${\mathbf v}\in \mathcal{V}$, ${\mathbf u}\neq {\mathbf v}$, $\mu\in (0,1)$.  Then, the strict convexity of $J$ is concluded from
\begin{align*}
\mu J({\mathbf u})&+(1-\mu)J({\mathbf v}) - J(\mu {\mathbf u} +(1-\mu){\mathbf v}) \\
  & =(\mu-\mu^2) ( \omega \,\|\tau{\mathrm D}^2{\mathbf u}-\tau{\mathrm D}^2{\bf v}\|^2_{\mathcal{L}^2(\Omega_L)} 
  + b\,\|\partial_{ss}{\mathbf u} - \partial_{ss}{\mathbf v}\|^2_{\mathcal{L}^2(\Omega_L)})
  >0
\end{align*}
since $\omega, b>0$.

Due to the assumed boundary conditions a Poincar$\acute{\rm e}$ inequality holds and we obtain
\begin{align*}
\omega \|\tau{\mathrm D}^2{\mathbf v}\|^2_{\mathcal{L}^2(\Omega_L)}+ b\|\partial_{ss}{\mathbf v}\|^2_{\mathcal{L}^2(\Omega_L)} \geq A_1 \|{\mathbf v}\|^2_{\mathcal{H}^2(\Omega_L)} - A_2
\end{align*}
for some $0 < A_1, A_2 < \infty$.
Hence
\begin{align*}
J({\mathbf v}) &= \omega \|\tau{\mathrm D}^2{\mathbf v}\|^2_{\mathcal{L}^2(\Omega_L)}+ b\|\partial_{ss}{\mathbf v}\|^2_{\mathcal{L}^2(\Omega_L)} 
  - 2 \langle { {\mathbf f}},{\mathbf v}\rangle_{\mathcal{L}^2(\Omega_L)}
  \geq A_1 \|{\mathbf v}\|^2_{\mathcal{H}^2(\Omega_L)} - 2|\langle {\mathbf f}, {\mathbf v}\rangle_{\mathcal{L}^2(\Omega_L)}| - A_2\\
& \geq \|{\mathbf v}\|_{\mathcal{H}^2(\Omega_L)}\, (A_1\|{\mathbf v}\|_{\mathcal{H}^2(\Omega_L)}-2\|{\mathbf f}\|_{\mathcal{L}^2(\Omega_L)}) - A_2.
\end{align*}
Thus, $J({\mathbf v})\rightarrow \infty$, if $\|{\mathbf v}\|_{\mathcal{H}^2(\Omega_L)} \rightarrow \infty$ for fixed ${\mathbf f}\in \mathcal{L}^2(\Omega_L)$, i.e., $J$ is coercive.
  
Let $({\mathbf v}_n)_{n\in \mathbb{N}}$ be a sequence in $\mathcal{V}$ that converges weakly to ${\mathbf v}\in \mathcal{V}$ in ${\mathcal{H}}^2$, 
i.e., ${\mathbf v}_n \stackrel{\mathcal{H}^2}{\rightharpoonup} {\mathbf v}$ for $n\rightarrow \infty$. 
Then, in particular, ${\mathbf v}_n \stackrel{\mathcal{L}^2}{\rightharpoonup} {\mathbf v}$ and 
$\partial_{ss}{\mathbf v}_n \stackrel{\mathcal{L}^2}{\rightharpoonup} \partial_{ss}{\mathbf v}$ for $n\rightarrow \infty$. 
Since the norm is lower semi-continuous w.r.t.\ weak convergence and the inner product with ${\mathbf f}\in \mathcal{L}^2(\Omega_L)$ is continuous w.r.t.\ weak convergence, we obtain $J({\mathbf v})\leq \lim_{n\rightarrow \infty} \inf J({\mathbf v}_n)$, i.e., $J$ is weakly lower semi-continuous.  

The convexity of $\mathcal{K}$ results from the affine linearity of $e$.
Let ${\mathbf u}$, ${\mathbf v}\in \mathcal{K}$, $\mu\in [0,1]$. Then, it holds $\mu{\mathbf u}+(1-\mu){\mathbf v} \in \mathcal{K}$ because of
$e(\mu {\mathbf u}+(1-\mu){\mathbf v}) = \mu e({\mathbf u})+(1-\mu)e({\mathbf v})=0$.

Since $\mathcal{V}$ is closed and $e$ is continuous, also $\mathcal{K}$ is closed. This,
together with convexity, implies that $\mathcal{K}$ is also weakly closed.
\end{proof}

\begin{theorem}[Existence and uniqueness of minimizer]\label{theo:mini}
The constrained minimization problem \eqref{def:mini} has a unique solution ${\mathbf r}_{k+1}\in \mathcal{K}_{k+1}$ on every time level $k = 1, \ldots, N-1$.
\end{theorem}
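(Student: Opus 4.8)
The plan is to apply the direct method of the calculus of variations, invoking the standard existence theorem for convex minimization problems in reflexive Banach spaces. All the necessary structural properties have already been established in Lemma~\ref{lem:1}: the cost functional $J_{k+1}$ is coercive and weakly lower semi-continuous, and the admissible set $K_{k+1}$ is nonempty, convex, and weakly closed. Since $H^2(\Omega_l;\mathbb{R}^3)$ is a Hilbert space, hence reflexive, these hypotheses are exactly what is required.

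First I would verify that $K_{k+1}$ is nonempty, since this is needed for the infimum to be finite and is not explicitly asserted in Lemma~\ref{lem:1}; the natural candidate is $\mathbf{r}_k$ itself, which satisfies $e_{k+1}(\mathbf{r}_k)=2\partial_s(\mathbf{r}_k-\mathbf{r}_k)\cdot\partial_s\mathbf{r}_k=0$ and $|\partial_s\mathbf{r}_k-\partial_s\mathbf{r}_k|=0\le\tau^2$, provided $\mathbf{r}_k\in V$ (which holds inductively). Next I would argue that a minimizing sequence exists and is bounded: let $(\mathbf{v}_n)\subset K_{k+1}$ satisfy $J_{k+1}(\mathbf{v}_n)\to\inf_{K_{k+1}}J_{k+1}=:m$, where $m>-\infty$ because coercivity bounds $J_{k+1}$ from below. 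Coercivity forces $\|\mathbf{v}_n\|_{H^2}$ to be bounded, for otherwise a subsequence would drive $J_{k+1}$ to $+\infty$, contradicting that the sequence is minimizing.

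Then I would extract, by reflexivity of $H^2(\Omega_l;\mathbb{R}^3)$ and the Banach--Alaoglu/Eberlein--\v{S}mulian theorem, a weakly convergent subsequence $\mathbf{v}_{n_j}\rightharpoonup\mathbf{r}_{k+1}$ in $H^2$. Since $K_{k+1}$ is weakly closed, the weak limit $\mathbf{r}_{k+1}$ lies in $K_{k+1}$. By weak lower semi-continuity of $J_{k+1}$ we get $J_{k+1}(\mathbf{r}_{k+1})\le\liminf_j J_{k+1}(\mathbf{v}_{n_j})=m$, while $\mathbf{r}_{k+1}\in K_{k+1}$ gives the reverse inequality $J_{k+1}(\mathbf{r}_{k+1})\ge m$. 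Hence $\mathbf{r}_{k+1}$ is a minimizer.

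Uniqueness follows immediately from strict convexity of $J_{k+1}$ together with convexity of $K_{k+1}$: if $\mathbf{u}\ne\mathbf{v}$ were two distinct minimizers, then $\tfrac12(\mathbf{u}+\mathbf{v})\in K_{k+1}$ and strict convexity would give $J_{k+1}(\tfrac12(\mathbf{u}+\mathbf{v}))<\tfrac12 J_{k+1}(\mathbf{u})+\tfrac12 J_{k+1}(\mathbf{v})=m$, contradicting minimality. I do not anticipate a genuine obstacle here, as the argument is a textbook application of the direct method; the only point requiring mild care is the nonemptiness of $K_{k+1}$ and, more subtly, the tacit induction hypothesis that $\mathbf{r}_k\in V$ so that the recursion is well-posed at every level. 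Everything else is furnished directly by the preceding lemma.
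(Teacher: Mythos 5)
Your proposal is correct and follows essentially the same route as the paper: the direct method using the coercivity, weak lower semi-continuity, convexity, and weak closedness established in Lemma~\ref{lem:1}, with uniqueness from strict convexity of $J_{k+1}$ on the convex set $K_{k+1}$. The only addition is your explicit check that $K_{k+1}\neq\emptyset$ via the candidate $\mathbf{r}_k$, a point the paper's proof leaves implicit, and which is indeed needed for the infimum to be attained.
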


\begin{proof}
Lemma~\ref{lem:1} provides the necessary conditions for a general existence and uniqueness result for constrained minimization problems, see e.g.~\cite{troeltzsch:b:2010}. We state the proof here for completeness.

Choose a minimizing sequence $({\mathbf v}_n)_{n\in \mathbb{N}}$, ${\mathbf v}_n\in \mathcal{K}$, with $J({\mathbf v}_n) \rightarrow \inf_{{\mathbf v}\in \mathcal{K}} J({\mathbf v})$ for $n\rightarrow \infty$. Then $-\infty < \inf_{{\mathbf v}\in \mathcal{K}} J({\mathbf v})< \infty$ and $({\mathbf v}_n)_{n\in \mathbb{N}}$  is bounded in view of the coercivity of $J$. Hence, there exists a subset $D\subset \mathbb{N}$ and ${\mathbf r}\in \mathcal{V}$ such that ${\mathbf v}_n\stackrel{\mathcal{H}^2}{\rightharpoonup}{\mathbf r}$ for $D \ni n \to \infty$.  Since $\mathcal{K}$ is weakly closed, ${\mathbf r}\in \mathcal{K}$. The weak lower semi-continuity of $J$ implies 
$J({\mathbf r}) \leq \inf_{n\in D} J({\mathbf v}_n)$, whence ${\mathbf r}$ is a minimizer.

Since $\mathcal{K}$ is convex, the strict convexity of $J$ on $\mathcal{V}$ implies the uniqueness of the minimizer. Assume ${\mathbf u},{\mathbf v}\in \mathcal{K}$ 
to be two minimizers that satisfy ${\mathbf u}\neq {\mathbf v}$ with $J({\mathbf u})=J({\mathbf v})$.
Then $J(\mu {\mathbf u}+(1+\mu){\mathbf v})< \mu J({\mathbf u})+(1-\mu)J({\mathbf v})=J({\mathbf u})$ for $\mu\in (0,1)$.
Since $\mu {\mathbf u}+(1+\mu){\mathbf v}\in \mathcal{K}$ for $\mu\in (0,1)$, this contradicts the assumption.
\end{proof}

Note that the uniqueness of the minimizer is meaningless for the solvability statement of the fiber system, since the unique minimizer need not necessarily be the only solution in view of possibly existing saddle points.

The fact $e_{k+1}({\mathbf r}_{k+1}) = 0$ implies
\begin{align*}
0 \leq |\partial_s{\mathbf r}_{k+1} - \partial_s{\mathbf r}_{k}|^2 = |\partial_s{\mathbf r}_{k+1}|^2 + |\partial_s{\mathbf r}_{k}|^2
- 2\partial_s{\mathbf r}_{k+1} \cdot \partial_s{\mathbf r}_{k} = |\partial_s{\mathbf r}_{k+1}|^2 - |\partial_s{\mathbf r}_{k}|^2
\end{align*}
for all $k=0,...,N-1$. Hence, together with $|\partial_s{\bf r}_{0}| = 1$, the following lemma follows immediately.

\begin{lemma}\label{le:increase}
The relation $1\le |\partial_s{\bf r}_{k}(s)| \le |\partial_s{\bf r}_{k+1}(s)|$ holds for all $s \in \Omega_L$, $k=0,...,N-1$.
\end{lemma}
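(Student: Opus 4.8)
The plan is to read off both bounds directly from the two constraints defining the admissible set $K_{k+1}$, which the minimizer $\mathbf{r}_{k+1}$ satisfies by Theorem~\ref{theo:mini}. Thus $\mathbf{r}_{k+1}$ obeys the pointwise inequality constraint $|\partial_s\mathbf{r}_k - \partial_s\mathbf{r}_{k+1}| \le \tau^2$ together with the equality constraint $e_{k+1}(\mathbf{r}_{k+1}) = 2\,\partial_s(\mathbf{r}_{k+1}-\mathbf{r}_k)\cdot\partial_s\mathbf{r}_k = 0$. Pointwise evaluation of $\partial_s\mathbf{r}_{k+1}$ is legitimate here because $V \subset H^2(\Omega_l;\mathbb{R}^3)$ embeds continuously into $C^1([0,l];\mathbb{R}^3)$, so all derivative quantities in the statement are genuine continuous functions of $s$.

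For the upper bound I would telescope the inequality constraint. The triangle inequality gives $|\partial_s\mathbf{r}_{j+1}| \le |\partial_s\mathbf{r}_j| + \tau^2$ for each admissible $j$, so iterating down to the initial data, where $\partial_s\mathbf{r}_1 = \partial_s\mathbf{r}_0 = -\mathbf{e_g}$ and hence $|\partial_s\mathbf{r}_1| = 1$, yields $|\partial_s\mathbf{r}_{k+1}| \le 1 + k\tau^2 \le 1 + N\tau^2 = 1 + T\tau$ upon using $N\tau = T$. This is precisely the chain displayed just before the statement.

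The lower bound and the monotonicity come from the equality constraint, which I would rewrite as $\partial_s\mathbf{r}_{k+1}\cdot\partial_s\mathbf{r}_k = |\partial_s\mathbf{r}_k|^2$. Applying the Cauchy--Schwarz inequality to the left-hand side gives $|\partial_s\mathbf{r}_k|^2 \le |\partial_s\mathbf{r}_{k+1}|\,|\partial_s\mathbf{r}_k|$, and dividing by $|\partial_s\mathbf{r}_k|$ produces the monotonicity $|\partial_s\mathbf{r}_k| \le |\partial_s\mathbf{r}_{k+1}|$. I would then close an induction on $k$: the base case $|\partial_s\mathbf{r}_0| = |\partial_s\mathbf{r}_1| = |\mathbf{e_g}| = 1$ holds from the initial condition, and if $|\partial_s\mathbf{r}_k| \ge 1$ then in particular $|\partial_s\mathbf{r}_k| > 0$, so the division is justified and monotonicity forces $|\partial_s\mathbf{r}_{k+1}| \ge |\partial_s\mathbf{r}_k| \ge 1$.

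Since everything is elementary there is no real obstacle; the only points demanding care are confirming that the minimizer indeed lies in $K_{k+1}$ so that both constraints are available, and guaranteeing $|\partial_s\mathbf{r}_k(s)| > 0$ at each $s$ before dividing in the Cauchy--Schwarz step, which is exactly what the inductive lower bound supplies. Combining the monotone lower chain with the telescoped upper bound then delivers the stated sandwich $1 \le |\partial_s\mathbf{r}_k(s)| \le |\partial_s\mathbf{r}_{k+1}(s)| \le 1 + T\tau$ for every $s \in \Omega_l$.
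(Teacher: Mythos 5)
Your proposal is correct and follows essentially the same route as the paper: the upper bound via telescoping the $\tau^2$-inequality from the definition of $K_{k+1}$ down to $|\partial_s\mathbf{r}_0| = |\mathbf{e_g}| = 1$ with $N\tau^2 = T\tau$, and the monotonicity plus lower bound via Cauchy--Schwarz applied to the constraint $e_{k+1}(\mathbf{r}_{k+1})=0$ rewritten as $\partial_s\mathbf{r}_{k+1}\cdot\partial_s\mathbf{r}_k = |\partial_s\mathbf{r}_k|^2$. If anything, you are slightly more careful than the paper, which leaves the inductive justification of $|\partial_s\mathbf{r}_k|>0$ before dividing, and the pointwise meaning of the constraints via $H^2 \hookrightarrow C^1$, implicit.
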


\begin{proposition}[Surjectivity of linearized constraint functional] \label{th:exLM}
For $k=1,...,N-1$ the linearized constraint functional $e_{k+1}'\in \mathcal{L}(\mathcal{H}^2_{0,L}(\Omega_L; {\mathbb R}^3);\mathcal{H}^1_{0,L}(\Omega_L))$ is surjective.
\end{proposition}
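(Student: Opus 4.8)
\emph{Proof plan.} The plan is to construct an explicit right inverse for $e_{k+1}'$. Since $e_{k+1}$ in \eqref{def:e} is affine in its argument, its Fr\'echet derivative is the bounded linear map
\[
e_{k+1}'[\boldsymbol{\phi}] = 2\,\partial_s\boldsymbol{\phi}\cdot\partial_s{\mathbf r}_k,
\]
independent of the base point. Surjectivity therefore amounts to the following: given an arbitrary $g\in {H}^1_{0,l}(\Omega_l)$, produce $\boldsymbol{\phi}\in {H}^2_{0,l}(\Omega_l;{\mathbb R}^3)$ with $2\,\partial_s\boldsymbol{\phi}\cdot\partial_s{\mathbf r}_k=g$. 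Writing ${\mathbf t}:=\partial_s{\mathbf r}_k$ for the tangent field, the natural idea is to let $\partial_s\boldsymbol{\phi}$ point in the direction of ${\mathbf t}$, namely $\partial_s\boldsymbol{\phi}=\tfrac{1}{2}\,g\,{\mathbf t}/|{\mathbf t}|^2$, and to recover $\boldsymbol{\phi}$ by integration from the clamped end,
\[
\boldsymbol{\phi}(s):=\int_l^s \frac{g(\sigma)}{2}\,\frac{{\mathbf t}(\sigma)}{|{\mathbf t}(\sigma)|^2}\,d\sigma .
\]

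The first step is to check that this $\boldsymbol{\phi}$ lies in the correct space. Here Lemma~\ref{le:increase} is the decisive ingredient: it guarantees $|{\mathbf t}(s)|=|\partial_s{\mathbf r}_k(s)|\ge 1>0$ uniformly in $s$, so that $|{\mathbf t}|^2$ is bounded away from zero and ${\mathbf t}/|{\mathbf t}|^2$ is well defined. Using that ${H}^1(\Omega_l)$ is a Banach algebra in one space dimension (via the embedding ${H}^1\hookrightarrow C^0$ recalled in Section~\ref{sec:3}) together with this uniform lower bound, I would argue that $|{\mathbf t}|^2\in{H}^1(\Omega_l)$ and hence $1/|{\mathbf t}|^2\in{H}^1(\Omega_l)$, whence $\tfrac{1}{2}\,g\,{\mathbf t}/|{\mathbf t}|^2\in {H}^1(\Omega_l;{\mathbb R}^3)$. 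Consequently $\partial_s\boldsymbol{\phi}\in {H}^1(\Omega_l;{\mathbb R}^3)$ and therefore $\boldsymbol{\phi}\in {H}^2(\Omega_l;{\mathbb R}^3)$.

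The second step is to verify the boundary conditions defining ${H}^2_{0,l}$, that is $\boldsymbol{\phi}(l)={\mathbf 0}$ and $\partial_s\boldsymbol{\phi}(l)={\mathbf 0}$. The first holds by construction of the integral. The second follows because $g\in {H}^1_{0,l}(\Omega_l)$ forces $g(l)=0$, whence $\partial_s\boldsymbol{\phi}(l)=\tfrac{1}{2}\,g(l)\,{\mathbf t}(l)/|{\mathbf t}(l)|^2={\mathbf 0}$. A direct computation then closes the argument, since
\[
e_{k+1}'[\boldsymbol{\phi}]=2\,\partial_s\boldsymbol{\phi}\cdot{\mathbf t}
= g\,\frac{{\mathbf t}\cdot{\mathbf t}}{|{\mathbf t}|^2}=g ,
\]
so that $g$ has a preimage and $e_{k+1}'$ is surjective.

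I expect the only genuine obstacle to lie in the regularity bookkeeping of the first step, namely confirming that dividing by $|{\mathbf t}|^2$ and multiplying by $g$ keeps everything within ${H}^1$. This is precisely where the non-degeneracy $|\partial_s{\mathbf r}_k|\ge 1$ furnished by Lemma~\ref{le:increase} and the one-dimensional Banach-algebra structure of ${H}^1$ are indispensable: without the uniform lower bound on $|{\mathbf t}|$ the pointwise division would destroy the required Sobolev regularity and the construction would break down.
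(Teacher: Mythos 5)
Your proof is correct and takes essentially the same approach as the paper: both construct the identical explicit right inverse $\boldsymbol{\phi}(s)=-\int_s^l g\,\partial_u{\mathbf r}_k/(2|\partial_u{\mathbf r}_k|^2)\,du$ and rely on the lower bound $|\partial_s{\mathbf r}_k|\ge 1$ from Lemma~\ref{le:increase} to make the division admissible. Your Banach-algebra phrasing of the $H^1$-regularity check is merely a repackaging of the paper's product/chain-rule bookkeeping (using $\partial_s{\mathbf r}_k, g\in L^\infty$ and $\partial_{ss}{\mathbf r}_k, \partial_s g\in L^2$), and your explicit remark that $g(l)=0$ yields $\partial_s\boldsymbol{\phi}(l)={\mathbf 0}$ only spells out a boundary condition the paper states without comment.
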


\begin{proof} 
We have $e'_{k+1}[\boldsymbol{\phi}] = 2  \partial_s{\mathbf r}_k \cdot \partial_s \boldsymbol{\phi}$, $\boldsymbol{\phi}\in \mathcal{H}^2_{0,L}(\Omega_L)$.
Let $\psi\in \mathcal{H}^1_{0,L}(\Omega_L)$ be arbitrary. Set
\begin{align}\label{eq:inverse}
\boldsymbol{\phi}(s) = -\int_s^L \frac{\psi \,\partial_u {\mathbf r}_k}{2|\partial_u{\mathbf r}_k|^2} \,du, \quad s \in \Omega_L.
\end{align}
Note that $\partial_{s} {\mathbf r}_k, \psi\in \mathcal{C}^0([0,L])$. Hence, together with Lemma \ref{le:increase}, 
we obtain $\boldsymbol{\phi} \in \mathcal{L}^2(\Omega_L; {\mathbb R}^3)$ with $\boldsymbol{\phi}(L) = 0$. 
We find
\begin{align*}
\psi= 2 \partial_s{\mathbf r}_k \cdot \partial_s\boldsymbol{\phi} = e'_{k+1}[\boldsymbol{\phi}], \qquad \qquad \text{as }\partial_s \boldsymbol{\phi} = \frac{\psi \,\partial_s {\mathbf r}_k}{2|\partial_s{\mathbf r}_k|^2}
\end{align*}
holds. Moreover $\partial_s \boldsymbol{\phi} \in \mathcal{L}^2(\Omega_L; {\mathbb R}^3)$ with $\partial_s\boldsymbol{\phi}(L) = 0$. 
Finally, $\partial_{ss}\boldsymbol{\phi}\in \mathcal{L}^2(\Omega_L; {\mathbb R}^3)$ can be concluded from
$\partial_{ss} {\mathbf r}_k \in \mathcal{L}^2(\Omega_L; {\mathbb R}^3)$,
$\partial_s \psi, \psi\in \mathcal{L}^2(\Omega_L)$, $\partial_{s}{\mathbf r}_k \in \mathcal{L}^\infty(\Omega_L; {\mathbb R}^3)$ and
$\psi\in \mathcal{L}^\infty(\Omega_L)$ using the chain rule. This shows $\boldsymbol{\phi}\in \mathcal{H}^2_{0,L}(\Omega_L)$.
\end{proof}

\begin{remark}\label{rm:inverse}
Note that $e_{k+1}'$ is not injective on $\mathcal{H}^2_{0,L}(\Omega_L; {\mathbb R}^3)$. 
Nevertheless we denote the mapping $\psi \mapsto \boldsymbol{\phi}$ in \eqref{eq:inverse} by $(e'_{k+1})^{-1}$, because $e'_{k+1}(e'_{k+1})^{-1}$ is the identity on $\mathcal{H}^1_{0,L}(\Omega_L)$. Of course, $(e'_{k+1})^{-1} \in \mathcal{L}(\mathcal{H}^1_{0,L}(\Omega_L); \mathcal{H}^2_{0,L}(\Omega_L; {\mathbb R}^3))$ by the inverse mapping theorem (applied in the proper quotient space setting).
\end{remark}

\begin{theorem}[Existence of discrete solution]\label{th:exist}
For $k=1,...,N-1$ let ${\mathbf r}_{k+1}$ be the minimizer of $J_{k+1}$ on $\mathcal{K}_{k+1}$, provided in Theorem \ref{theo:mini}, and
\begin{align}\label{eq:lambda}
\lambda_{k+1} := -J_{k+1}'({\mathbf r}_{k+1})(e'_{k+1})^{-1}.
\end{align}
Then $\lambda_{k+1} \in \mathcal{H}^{-1}(\Omega_L)$ (see Remark \ref{rm:inverse}), and $({\mathbf r}_{k+1}, \lambda_{k+1})$ are solving weakly the discrete fiber system \eqref{eq:2}, i.e., 
\begin{subequations}\label{eq:exist}
\begin{align}\label{eq:exist1}
\omega \,({\mathrm D}_{k+1}^2 {\mathbf r}_{k+1},\boldsymbol{\phi} )_{\mathcal{L}^2(\Omega_L)} 
&= - {}_{\mathcal{H}_{0,L}^1(\Omega_L)}{\!\langle \partial_{s}{\mathbf r}_k \cdot\partial_{s}\boldsymbol{\phi},\lambda_{k+1}\rangle}_{\mathcal{H}^{-1}(\Omega_L)} \nonumber\\
& \quad - b \,( \partial_{ss}{\bf r}_{k+1}, \partial_{ss} \boldsymbol{\phi})_{\mathcal{L}^2(\Omega_L)} +({\mathbf f}_{k+1}, \boldsymbol{\phi})_{\mathcal{L}^2(\Omega_L)} \\\label{eq:exist2}
\partial_s ({\mathbf r}_{k+1}- {\mathbf r}_{k}) \cdot \partial_s {\mathbf r}_{k} &= 0. 
\end{align}
\end{subequations}
for all test functions $\boldsymbol{\phi}\in \mathcal{H}^2_{0,L}(\Omega_L;\mathbb{R}^3)$.
\end{theorem}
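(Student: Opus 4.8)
The plan is to dispatch the membership $\lambda_{k+1}\in{H}^{-1}(\Omega_l)$ and the constraint identity \eqref{eq:exist2} directly, and then to reduce the weak equation \eqref{eq:exist1} to a single kernel condition on $J_{k+1}'$. For the first point, Remark~\ref{rm:inverse} gives $(e_{k+1}')^{-1}\in L({H}^1_{0,l}(\Omega_l);{H}^2_{0,l}(\Omega_l;{\mathbb R}^3))$, while $J_{k+1}'({\mathbf r}_{k+1})\in({H}^2_{0,l}(\Omega_l;{\mathbb R}^3))'$; hence the composition in \eqref{eq:lambda} is a bounded linear functional on ${H}^1_{0,l}(\Omega_l)$, that is $\lambda_{k+1}\in{H}^{-1}(\Omega_l)$. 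The identity \eqref{eq:exist2} is just the feasibility $e_{k+1}({\mathbf r}_{k+1})=0$, which holds since the minimizer of Theorem~\ref{theo:mini} satisfies ${\mathbf r}_{k+1}\in{K}_{k+1}$.

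For \eqref{eq:exist1} I would first rewrite its right-hand side using the explicit form of $J_{k+1}'$ in \eqref{eq:adj2} together with $\partial_s{\mathbf r}_k\cdot\partial_s{\boldsymbol\phi}=\tfrac12\,e_{k+1}'[{\boldsymbol\phi}]$, which shows (up to the harmless overall factor $2$) that \eqref{eq:exist1} is equivalent to the stationarity of the Lagrangian,
\begin{align*}
J_{k+1}'({\mathbf r}_{k+1})[{\boldsymbol\phi}]+{}_{{H}^1_{0,l}(\Omega_l)}{\!\langle e_{k+1}'[{\boldsymbol\phi}],\lambda_{k+1}\rangle}_{{H}^{-1}(\Omega_l)}=0,\qquad {\boldsymbol\phi}\in{H}^2_{0,l}(\Omega_l;{\mathbb R}^3).
\end{align*}
Inserting the definition \eqref{eq:lambda} of $\lambda_{k+1}$ and the right-inverse relation $e_{k+1}'(e_{k+1}')^{-1}=\mathrm{id}$ on ${H}^1_{0,l}(\Omega_l)$ from Remark~\ref{rm:inverse}, the pairing becomes $-J_{k+1}'({\mathbf r}_{k+1})[(e_{k+1}')^{-1}e_{k+1}'[{\boldsymbol\phi}]]$, so the stationarity identity collapses to
\begin{align*}
J_{k+1}'({\mathbf r}_{k+1})\bigl[{\boldsymbol\phi}-(e_{k+1}')^{-1}e_{k+1}'[{\boldsymbol\phi}]\bigr]=0 .
\end{align*}
The bracketed direction is annihilated by $e_{k+1}'$, hence lies in $\ker e_{k+1}'$, and conversely every element of $\ker e_{k+1}'$ is fixed by the projection ${\boldsymbol\phi}\mapsto{\boldsymbol\phi}-(e_{k+1}')^{-1}e_{k+1}'[{\boldsymbol\phi}]$; thus \eqref{eq:exist1} holds for all test functions if and only if $J_{k+1}'({\mathbf r}_{k+1})$ vanishes on $\ker e_{k+1}'$. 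In particular the result does not depend on the specific right inverse fixed in \eqref{eq:inverse}.

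The remaining and only substantial step is to prove $J_{k+1}'({\mathbf r}_{k+1})[{\boldsymbol\psi}]=0$ for every ${\boldsymbol\psi}\in\ker e_{k+1}'$ by exploiting the minimality of ${\mathbf r}_{k+1}$. Since $e_{k+1}$ is affine with derivative $e_{k+1}'$, the line $t\mapsto{\mathbf r}_{k+1}+t{\boldsymbol\psi}$ satisfies $e_{k+1}({\mathbf r}_{k+1}+t{\boldsymbol\psi})=0$ for all $t$, so it stays on the equality part of ${K}_{k+1}$. If this line is admissible for the inequality part as well for all small $|t|$, then $t\mapsto J_{k+1}({\mathbf r}_{k+1}+t{\boldsymbol\psi})$ attains a minimum at $t=0$ and the two-sided comparison yields $J_{k+1}'({\mathbf r}_{k+1})[{\boldsymbol\psi}]=0$; by convexity of $J_{k+1}$ the vanishing of this derivative is also sufficient for minimality along these directions. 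Equivalently, one may invoke the surjectivity of $e_{k+1}'$ (Proposition~\ref{th:exLM}) to place the problem in the regular-point setting of the Lagrange multiplier rule \cite{troeltzsch:b:2005,hinze:b:2009}, which produces a multiplier realizing exactly this kernel condition.

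The main obstacle is precisely the admissibility of the two-sided variation against the inequality constraint $|\partial_s{\mathbf r}_k-\partial_s{\mathbf v}|\le\tau^2$. With ${\mathbf d}=\partial_s({\mathbf r}_{k+1}-{\mathbf r}_k)$ one has $|\partial_s{\mathbf r}_k-\partial_s({\mathbf r}_{k+1}+t{\boldsymbol\psi})|^2=|{\mathbf d}|^2+2t\,{\mathbf d}\cdot\partial_s{\boldsymbol\psi}+t^2|\partial_s{\boldsymbol\psi}|^2$, which remains $\le\tau^2$ for both signs of small $t$ only where the bound is strict, $|{\mathbf d}(s)|<\tau^2$; on an active set one would recover merely the one-sided variational inequality $J_{k+1}'({\mathbf r}_{k+1})[{\boldsymbol\psi}]\ge 0$ for admissible directions, not the equality. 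I would therefore argue that this inequality is inactive at the minimizer, so that ${\mathbf r}_{k+1}$ is a genuine local minimizer over the affine manifold $\{e_{k+1}=0\}$ alone; excluding an active set of positive measure is the delicate point. Once the kernel condition $J_{k+1}'({\mathbf r}_{k+1})|_{\ker e_{k+1}'}=0$ is secured, the assembly of \eqref{eq:exist1} reduces to the algebraic rearrangement carried out above.
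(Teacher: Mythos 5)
Your proposal is correct and takes essentially the same route as the paper: feasibility ${\mathbf r}_{k+1}\in K_{k+1}$ gives \eqref{eq:exist2}, boundedness of $(e'_{k+1})^{-1}$ from Remark \ref{rm:inverse} gives $\lambda_{k+1}\in H^{-1}(\Omega_l)$, and \eqref{eq:exist1} is obtained by inserting the definition \eqref{eq:lambda} and testing $J_{k+1}'({\mathbf r}_{k+1})$ on the null directions $\boldsymbol{\phi}-(e'_{k+1})^{-1}[2\,\partial_s{\mathbf r}_k\cdot\partial_s\boldsymbol{\phi}]$ of the linearized constraint, which is exactly the paper's computation. The one ``delicate point'' you flag --- two-sided admissibility of these variations against the pointwise inequality $|\partial_s{\mathbf r}_k-\partial_s{\mathbf v}|\le\tau^2$, i.e.\ ruling out an active set --- is a legitimate concern, but the paper's own proof does not address it either: it passes directly from minimality over $K_{k+1}$ to the equality $J_{k+1}'({\mathbf r}_{k+1})[\boldsymbol{\phi}-(e'_{k+1})^{-1}[2\,\partial_s{\mathbf r}_k\cdot\partial_s\boldsymbol{\phi}]]=0$, tacitly treating the $\tau$-inequality as a compactness device that is variationally inactive, so your attempt reproduces the published argument while making its implicit assumption explicit.
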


\begin{proof}
By definition, all elements from $\mathcal{K}_{k+1}$ fulfill \eqref{eq:exist2}. Furthermore, since ${\bf r}_{k+1}$ minimizes $J_{k+1}$ on $\mathcal{K}_{k+1}$ and
\begin{align*}
e_{k+1}[\boldsymbol{\phi} - (e'_{k+1})^{-1}[2  \partial_s{\bf r}_k \cdot \partial_s \boldsymbol{\phi}]] = 0 \quad \mbox{ for all } \boldsymbol{\phi}\in \mathcal{H}^2_{0,L}(\Omega_L;\mathbb{R}^3),
\end{align*}
we have
\begin{align*}
0 &= J_{k+1}'({\mathbf r}_{k+1}) [\boldsymbol{ \phi} - (e'_{k+1})^{-1}[2  \partial_s{\mathbf r}_k \cdot \partial_s \boldsymbol{\phi}]]\\
&= 2 \, \big( \omega \,( {\mathrm D}_{k+1}^2 {\mathbf r}_{k+1}, \boldsymbol{\phi} )_{\mathcal{L}^2(\Omega_L)}
   + b \,( \partial_{ss}{\mathbf r}_{k+1}, \partial_{ss} \boldsymbol{\phi} )_{\mathcal{L}^2(\Omega_L)} 
   -( {\mathbf  f}_{k+1}, \boldsymbol{\phi} )_{\mathcal{L}^2(\Omega_L)} \\
& \quad   \quad +  {}_{\mathcal{H}_{0,L}^1(\Omega_L)}{\!\langle \partial_{s}{\bf r}_k \cdot\partial_{s}\boldsymbol{\phi},\lambda_{k+1}\rangle}_{\mathcal{H}^{-1}(\Omega_L)}\big) 
&& \mbox{ for all }\boldsymbol{\phi}\in \mathcal{H}^2_{0,L}(\Omega_L;\mathbb{R}^3).
\end{align*}
\end{proof}

%%%%%%%
\subsection{Stability estimates}

In the following we provide stability estimates for the discrete solution. For function spaces $\mathcal{B}_1(S_1)$ and $\mathcal{B}_2(S_2)$ on sets $S_1$ and $S_1$, respectively, we define as usual $\mathcal{B}_1(S_1) \otimes \mathcal{B}_2(S_2):= \mathrm{span}\{ f_1 f_2 \,| \, f_1 \in \mathcal{B}_1(S_1), f_2 \in \mathcal{B}_2(S_2)\}$, where
$(f_1 f_2)(s_1,s_2) := f_1(s_1)f_2(s_2)$, $s_1 \in S_1$, $s_2 \in S_2$ (algebraic tensor product).
The Sobolev space $\mathcal{H}^{2,1}(\Omega;\mathbb{R}^3)$ on $\Omega := \Omega_L \times \Omega_T$ 
then is defined as the completion of $\mathcal{H}^{2}(\Omega_L;\mathbb{R}^3) \otimes \mathcal{H}^{1}(\Omega_T)$ w.r.t.\ the metric associated to its inner product, see e.g.~\cite[Chap.~II.4]{reed:b:1980} (i.e., it is the Sobolev space of functions on $\Omega$ which are twice weakly differentiable in the first variable and once weakly differentiable in the second variable and square integrable on $\Omega$ together with their derivatives). Correspondingly, we set
$\mathcal{H}_{0,L,T}^{m_L, m_T}(\Omega)$, $m_L, m_T \in [0, \infty)$, to be the completion of $\mathcal{H}_{0,L}^{m_L}(\Omega_L) \otimes \mathcal{H}_{0,T}^{m_T}(\Omega_T)$ and use the notation $\mathcal{H}^{-m_L,-m_T}(\Omega) := (\mathcal{H}_{0,L,T}^{m_L, m_T}(\Omega))'$. In the case $m_L=m_T$ we suppress the index $m_T$.
For functions $h$ defined on $\Omega_T$ we use the following notation for discrete derivatives:
\begin{align*}
({\mathrm D}h)_{k} = ({\mathrm D}^1h)_{k} := \frac{h_{k} - h_{k-1}}{\tau}, \quad ({\mathrm D}^{n}h)_{k} := ({\mathrm D}({\mathrm D}^{n-1}h)^\tau)_k,
\quad k = n, \ldots, N. 
\end{align*}

\begin{proposition}[Stability estimates for ${\mathbf r}^\tau$]\label{stabestir}
Let ${\mathbf r}_{k+1}\in \mathcal{V}$ be as in Theorem \ref{th:exist}, $k=1,...,N-1$, and let ${\mathbf r}^{\tau} \in \mathcal{H}^{2,1}(\Omega;\mathbb{R}^3)$ be the corresponding linear interpolation. Then there exists $0 < K < \infty$, independent of $N \in {\mathbb N}$ (or, equivalently, the time discretization $\tau > 0$), such that
\begin{align}\label{eq:boundonr}
\max_{1 \le k \le N}\|({\mathrm D}{\mathbf r}^\tau)_k\|_{\mathcal{L}^{2}(\Omega_L)}\le K, \qquad 
\max_{0 \le k \le N}\|\partial_{ss}{\mathbf r}_k\|_{\mathcal{L}^{2}(\Omega_L)}\le K, \\ \label{eq:boundonfullr}
\quad \|{\mathbf r}^\tau\|_{\mathcal{H}^{2,1}(\Omega)}\le K,
\qquad \mbox{and  } \,\tau \|\partial_{ss}\partial_t {\mathbf r}^\tau\|^2_{\mathcal{L}^2(\Omega)} \le K.
\end{align}
\end{proposition}

\begin{proof} 
Since ${\mathbf r}_{k+1}\in \mathcal{V} \subset \mathcal{H}^{2}(\Omega_L;\mathbb{R}^3)$ for all $k=1,...,N-1$ and ${\mathbf r}^{\tau}(s)$ is piecewise linear for all $s \in \Omega_L$, we have ${\mathbf r}^{\tau} \in \mathcal{H}^{2,1}(\Omega;\mathbb{R}^3)$. We know that
\begin{align*}
\omega \,( ({\mathrm D}^2{\mathbf r}^\tau)_{k+1},\boldsymbol{\phi} )_{\mathcal{L}^2(\Omega_L)} 
=&- {}_{\mathcal{H}^1(\Omega_L)}{\!\langle \partial_{s}{\mathbf r}_k \cdot\partial_{s}\boldsymbol{\phi}, \lambda_{k+1}\rangle}_{\mathcal{H}^{-1}(\Omega_L)} \\
&- b \,( \partial_{ss}{\mathbf r}_{k+1}, \partial_{ss} \boldsymbol{\phi})_{\mathcal{L}^2(\Omega_L)} +({\mathbf  f}_{k+1}, \boldsymbol{\phi})_{\mathcal{L}^2(\Omega_L)},
\end{align*}
for all $\boldsymbol{\phi}\in \mathcal{H}^2_{0,L}(\Omega_L;\mathbb{R}^3)$.
Note that the first summand on the right-hand side is discretized in an explicit way.
Since $({\mathrm D}^2{\mathbf r}^\tau)_{k+1}= (({\mathrm D}{\mathbf r}^\tau)_{k+1}- ({\mathrm D}{\mathbf r}^\tau)_{k}) /\tau$,
the special choice $\boldsymbol{\phi}={\mathbf r}_{k+1}-{\mathbf r}_k\in \mathcal{H}^2_{0,L}(\Omega_L;\mathbb{R}^3)$ results in
\begin{align}\label{eq:verycrucial}
&\omega \left(\left \|({\mathrm D}{\mathbf r}^\tau)_{k+1} \right \|^2_{\mathcal{L}^2(\Omega_L)}
- \left \|({\mathrm D}{\mathbf r}^\tau)_{k} \right\|^2_{\mathcal{L}^2(\Omega_L)}
+ \|\tau ({\mathrm D}^2{\mathbf r}^\tau)_{k+1}\|^2_{\mathcal{L}^2(\Omega_L)}\right)\\\nonumber
&= -b \left(\|\partial_{ss}{\mathbf r}_{k+1}\|^2_{\mathcal{L}^2(\Omega_L)}- \|\partial_{ss}{\mathbf r}_{k}\|^2_{\mathcal{L}^2(\Omega_L)}
+ \|\partial_{ss}({\mathbf r}_{k+1}-{\mathbf r}_k)\|^2_{\mathcal{L}^2(\Omega_L)}\right)
+2({\mathbf f}_{k+1}, {\mathbf r}_{k+1}-{\mathbf r}_k)_{\mathcal{L}^2(\Omega_L)} 
\end{align}
by applying the identity $2(a-b)a=a^2 - b^2 +(a-b)^2$ and the functional constraint $e_{k+1}({\mathbf r}_{k+1})=0$.
Hence, we obtain
\begin{multline*}
\omega \|({\mathrm D}{\mathbf r}^{\tau})_{k+1}\|^2_{\mathcal{L}^2(\Omega_L)}+
b  \|\partial_{ss}{\mathbf r}_{k+1}\|^2_{\mathcal{L}^2(\Omega_L)} \\ 
\le \omega  \|({\mathrm D}{\mathbf r}^{\tau})_{k}\|^2_{\mathcal{L}^2(\Omega_L)}
+ b \|\partial_{ss}{\mathbf  r}_{k}\|^2_{\mathcal{L}^2(\Omega_L)}
+ 2\tau ({\mathbf f}_{k+1}, ({\mathrm D}{\mathbf r}^\tau)_{k+1})_{\mathcal{L}^2(\Omega_L)}.
\end{multline*}
Summing up $k = 1, \ldots, M-1 \le N-1$ gives the following crucial relation
\begin{align}\label{eq:crucial}
\omega \|({\mathrm D}{\mathbf r}^\tau)_{M}\|^2_{\mathcal{L}^2(\Omega_L)}+ b  \|\partial_{ss}{\mathbf r}_{M}\|^2_{\mathcal{L}^2(\Omega_L)} 
\leq 2\tau \sum_{k=1}^{M-1}({\mathbf f}_{k+1}, ({\mathrm D}{\mathbf r}^\tau)_{k+1})_{\mathcal{L}^2(\Omega_L)},
\end{align}
(note that $({\mathrm D}{\mathbf r}^{\tau})_{1} = {\mathbf 0}$, as well as, $\partial_{ss}{\mathbf r}_{1} = {\mathbf 0}$).
We estimate the scalar product on the right-hand side by Cauchy--Schwarz and Young's inequality, i.e.\ $2ab\leq a^2+b^2$, and find
\begin{align*}
\| ({\mathrm D}{\mathbf r}^\tau)_{M}\|^2_{\mathcal{L}^2(\Omega_L)}
\leq \frac{\tau}{\omega} \left( \sum_{k=1}^{N-1}\|{\mathbf  f}_{k+1}\|^2_{\mathcal{L}^2(\Omega_L)} 
+ \sum_{k=1}^{M-1} \| ({\mathrm D}{\mathbf r}^\tau)_{k+1}\|^2_{\mathcal{L}^2(\Omega_L)} \right).
\end{align*}
The discrete Gronwall Lemma implies 
\begin{align}\label{eq:gron}
\| ({\mathrm D}{\mathbf r}^\tau)_{M}\|^2_{\mathcal{L}^2(\Omega_L)}
\leq \frac{\tau}{\omega} \sum_{k=1}^{N-1} \|{\mathbf f}_{k+1}\|^2_{\mathcal{L}^2(\Omega_L)} \exp\left(\frac{T}{\omega}\right).
\end{align}
Together with 
\begin{align*}
\lim_{N \to \infty} \tau \sum_{k=1}^{N-1}\|{\mathbf f}_{k+1}\|^2_{\mathcal{L}^2(\Omega_L)} =  \|{\mathbf f}\|^2_{\mathcal{L}^2(\Omega)},
\end{align*}
\eqref{eq:gron} yields the existence of $0 < K_1 < \infty$, independent of $N \in {\mathbb N}$, such that
\begin{align}\label{eq:partialt}
\|({\mathrm D}{\mathbf r}^\tau)_M\|^2_{\mathcal{L}^{2}(\Omega_L)}\le K_1.
\end{align}
Combining \eqref{eq:crucial} and \eqref{eq:partialt} gives finally the existence of 
$0 < K_2 < \infty$, independent of $N \in {\mathbb N}$, such that
\begin{align}\label{eq:partialss}
\|(\partial_{ss}{\mathbf r}^\tau)_M\|^2_{\mathcal{L}^{2}(\Omega_L)}\le K_2.
\end{align}
The inequalities \eqref{eq:partialt}, \eqref{eq:partialss} together with the
Poincar$\acute{\rm e}$ inequality guarantee the existence of the desired $0 < K < \infty$, independent of $N \in {\mathbb N}$,
in \eqref{eq:boundonr} and also in the first case of \eqref{eq:boundonfullr}. Finally, summing up \eqref{eq:verycrucial}
in $k=1, \ldots, N-1$ yields:
\begin{multline*}
\omega \tau^2 \sum_{k=1}^{N-1} \|({\mathrm D}^2{\mathbf r}^\tau)_{k+1}\|^2_{\mathcal{L}^2(\Omega_L)}
+ b \tau \|\partial_{ss}\partial_t {\mathbf r}^\tau\|^2_{\mathcal{L}^2(\Omega)} \\
\le 2 \tau \sum_{k=1}^{N-1} ({\mathbf f}_{k+1}, ({\mathrm D}{\mathbf r}^{\tau})_{k+1})_{\mathcal{L}^2(\Omega_L)}
\le 2 \sqrt{K_1} \sqrt{\tau \sum_{k=1}^{N-1} \|{\mathbf f}_{k+1}\|^2_{\mathcal{L}^2(\Omega_L)}}.
\end{multline*}
Thus, also the second estimate in \eqref{eq:boundonfullr} holds true.
\end{proof}

Ideas for proving the next proposition we got from \cite{bartels:p:2013}, where the elastic non-inertial flow (first order in time) of inextensible curves was considered.

\begin{proposition}[Estimates for the algebraic constraint]\label{estimateconstraint}
Let ${\mathbf r}_{k+1}\in \mathcal{V}$ be as in Theorem \ref{th:exist}, $k=1,...,N-1$, and let ${\mathbf r}^{\tau} \in \mathcal{H}^{2,1}(\Omega;\mathbb{R}^3)$ be the corresponding linear interpolation. Then there exists $0 < R < \infty$, independent of $N \in {\mathbb N}$ (or, equivalently, the time discretization $\tau > 0$), such that
\begin{eqnarray}\label{esticonstraint}
\int_0^L |(\partial_{s}{\mathbf r}^{\tau})_{N}|^2 - 1 \,ds \le R \sqrt{\tau}.
\end{eqnarray}
\end{proposition}

\begin{proof} 
For $k=0, \ldots,N-1$ the functional constraint $e_{k+1}({\mathbf r}_{k+1})=0$ implies
\begin{eqnarray}\label{eq:binomi}
|(\partial_{s}{\mathbf r}^{\tau})_{k+1}|^2 = |(\partial_{s}{\mathbf r}^{\tau})_{k} + \tau({\mathrm D}\partial_{s}{\mathbf r}^{\tau})_{k+1}|^2
= |(\partial_{s}{\mathbf r}^{\tau})_{k}|^2 + \tau^2|(\partial_{s}\partial_{t}{\mathbf r}^{\tau})_{k+1}|^2.
\end{eqnarray}
Summation over $k = 0, \dots, N-1$ in \eqref{eq:binomi} yields
\begin{eqnarray*}
|(\partial_{s}{\mathbf r}^{\tau})_{N}|^2 - 1 = \tau^2\sum_{k=0}^{N-1}|(\partial_{s}\partial_{t}{\mathbf r}^{\tau})_{k+1}|^2,
\end{eqnarray*}
because $|(\partial_{s}{\mathbf r}^{\tau})_{0}| = 1$. Therefore,
\begin{multline*}
\int_0^L |(\partial_{s}{\mathbf r}^{\tau})_{N}|^2 - 1 \,ds = \tau^2\sum_{k=0}^{N-1}
\|(\partial_{s}\partial_{t}{\mathbf r}^{\tau})_{k+1}\|^2_{\mathcal{L}^2(\Omega_L)}
= \tau \int_0^T\|(\partial_{s}\partial_{t}{\mathbf r}^{\tau})\|^2_{\mathcal{L}^2(\Omega_L)} \,dt \\
\le R_1 \sqrt{\tau} \left( \sqrt{\int_0^T\|(\partial_{t}{\mathbf r}^{\tau})\|^2_{\mathcal{L}^2(\Omega_L)} \,dt}
\sqrt{\tau \int_0^T\|(\partial_{ss}\partial_{t}{\mathbf r}^{\tau})\|^2_{\mathcal{L}^2(\Omega_L)} \,dt}
+ \sqrt{\tau} \int_0^T\|(\partial_{t}{\mathbf r}^{\tau})\|^2_{\mathcal{L}^2(\Omega_L)} \,dt \right),
\end{multline*}
where we used the Gagliardo--Nirenberg interpolation inequality $\|\partial_{s} v \|^2_{\mathcal{L}^2(\Omega_L)}
\le R_1 \big{(}\|\partial_{ss} v \|_{\mathcal{L}^2(\Omega_L)} + \|v \|_{\mathcal{L}^2(\Omega_L)}\big{)}\| v \|_{\mathcal{L}^2(\Omega_L)}$ holding for some
$0<R_1<\infty$, independent of $v \in \mathcal{H}^2(\Omega_L; {\mathbb R}^3)$. Now the existence of the $0 < R < \infty$ as
in \eqref{esticonstraint}, independent of $\tau > 0$, follows by the second estimate in \eqref{eq:boundonfullr}.
\end{proof}

Before providing the stability bound for the Lagrange multiplier, we need to establish some bounds on the
inverse of the linearized constraint functional.

\begin{lemma}
Let $(e'_{k+1})^{-1} \in \mathcal{L}(\mathcal{H}^1_{0,L}(\Omega_L); \mathcal{H}^2_{0,L}(\Omega_L; {\mathbb R}^3))$ be as in Remark \ref{rm:inverse}, $k=1, \ldots,N-1$. 
Then there exists $0 < M < \infty$, independent of $N \in {\mathbb N}$, such that
\begin{align}\label{estinverse} \nonumber
\|(e'_{k+1})^{-1}[g]\|_{\mathcal{H}^2(\Omega_L)} \le M \, \|g\|_{\mathcal{H}^1(\Omega_L)}, \quad \quad \quad \quad 
\|(e'_{k+1})^{-1}[g]\|_{\mathcal{L}^2(\Omega_L)} \le M \, \|g\|_{\mathcal{L}^2(\Omega_L)}, \\ 
\left\|\big({\mathrm D}\big((e')^{-1}\big)^\tau\big)_{k+1}[g] \right\|_{\mathcal{L}^2(\Omega_L)}
\le M \, \|g\|_{\mathcal{H}^1(\Omega_L)},
\end{align}
for all $g \in \mathcal{H}^{1}_{0,L}(\Omega_L)$.
\end{lemma}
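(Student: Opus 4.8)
The plan is to work throughout with the explicit representation \eqref{eq:inverse} of the inverse, writing $\boldsymbol{\phi} := (e'_{k+1})^{-1}[g]$ so that
\[
\boldsymbol{\phi}(s) = -\int_s^l \frac{g\,\partial_u{\mathbf r}_k}{2|\partial_u{\mathbf r}_k|^2}\,du, \qquad \partial_s\boldsymbol{\phi} = \frac{g\,\partial_s{\mathbf r}_k}{2|\partial_s{\mathbf r}_k|^2}, \qquad \boldsymbol{\phi}(l)={\mathbf 0}.
\]
All three estimates rest on the same two uniform ingredients: the pointwise bound $1 \le |\partial_s{\mathbf r}_k| \le 1+T\tau$ from Lemma~\ref{le:increase} and the bound $\|\partial_{ss}{\mathbf r}_k\|_{{L}^2(\Omega_l)} \le K$ from Proposition~\ref{stabestir}, both with constants independent of $N$, together with the one-dimensional Sobolev embedding ${H}^1(\Omega_l) \hookrightarrow {L}^\infty(\Omega_l)$ and the Poincar\'e inequality (valid since $\boldsymbol{\phi}(l)={\mathbf 0}$), whose constants depend only on $l$.

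For the second ($L^2$--$L^2$) estimate I would use $|\partial_s\boldsymbol{\phi}| = |g|/(2|\partial_s{\mathbf r}_k|) \le |g|/2$, so that $\|\partial_s\boldsymbol{\phi}\|_{{L}^2(\Omega_l)} \le \tfrac12\|g\|_{{L}^2(\Omega_l)}$, and then Poincar\'e to pass from $\partial_s\boldsymbol{\phi}$ to $\boldsymbol{\phi}$. For the first ($H^2$--$H^1$) estimate the terms $\boldsymbol{\phi}$ and $\partial_s\boldsymbol{\phi}$ are handled as above; the real work is $\partial_{ss}\boldsymbol{\phi}$, obtained by differentiating $\partial_s\boldsymbol{\phi}$ with the product and quotient rules into one term carrying $\partial_s g$ and two terms carrying $\partial_{ss}{\mathbf r}_k$. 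The $\partial_s g$ term is bounded by $\tfrac12\|\partial_s g\|_{{L}^2(\Omega_l)}$ using $|\partial_s{\mathbf r}_k|\ge 1$; each $\partial_{ss}{\mathbf r}_k$ term, after cancelling powers of $|\partial_s{\mathbf r}_k|$ against the denominator via Cauchy--Schwarz and $|\partial_s{\mathbf r}_k|\ge 1$, is estimated by $\|g\|_{{L}^\infty(\Omega_l)}\,\|\partial_{ss}{\mathbf r}_k\|_{{L}^2(\Omega_l)} \le C_S K\,\|g\|_{{H}^1(\Omega_l)}$. Summing the three contributions yields $\|\boldsymbol{\phi}\|_{{H}^2(\Omega_l)} \le L\,\|g\|_{{H}^1(\Omega_l)}$.

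For the discrete-time estimate I would insert \eqref{eq:inverse} at levels $k$ and $k-1$ and write the difference quotient as
\[
\big({\mathrm D}\big((e')^{-1}\big)^\tau\big)_{k+1}[g](s) = -\frac{1}{2\tau}\int_s^l g\,\big(F(\partial_u{\mathbf r}_k) - F(\partial_u{\mathbf r}_{k-1})\big)\,du, \qquad F({\mathbf q}) := \frac{{\mathbf q}}{|{\mathbf q}|^2}.
\]
The key point is that $F$ is Lipschitz on $\{|{\mathbf q}|\ge 1\}$ with an absolute constant $c_F$; I would prove this directly from the algebraic identity for $F({\mathbf a})-F({\mathbf b})$ rather than by the mean value theorem, since $\{|{\mathbf q}|\ge 1\}$ is not convex. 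Combined with the constraint-set bound $|\partial_s{\mathbf r}_k - \partial_s{\mathbf r}_{k-1}|\le\tau^2$ (which holds for every relevant $k$ because ${\mathbf r}_{k+1}\in{K}_{k+1}$ and ${\mathbf r}_1={\mathbf r}_0$), this gives the pointwise bound $|F(\partial_u{\mathbf r}_k)-F(\partial_u{\mathbf r}_{k-1})|\le c_F\tau^2$, which absorbs the factor $1/\tau$ and leaves an $L^2$ bound of order $\tau\le T$; in particular it is dominated by $\|g\|_{{L}^2(\Omega_l)}\le\|g\|_{{H}^1(\Omega_l)}$ uniformly in $N$.

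The main obstacle is the $\partial_{ss}\boldsymbol{\phi}$ estimate in the first bound: because $\partial_{ss}{\mathbf r}_k$ is controlled only in ${L}^2(\Omega_l)$, the products $g\,\partial_{ss}{\mathbf r}_k$ cannot be estimated with $g$ in ${L}^2$, and one is forced to place $g$ in ${L}^\infty$ via the Sobolev embedding --- this is exactly what makes the ${H}^1$-norm of $g$ (rather than the ${L}^2$-norm) appear on the right-hand side of the first inequality. A secondary subtlety is the Lipschitz bound for $F$ on the non-convex set $\{|{\mathbf q}|\ge 1\}$ in the third estimate; once that and the pointwise $\tau^2$ constraint bound are in hand, the $1/\tau$ is harmless and the remaining computations are routine.
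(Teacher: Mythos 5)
Your proof is correct. For the first two estimates it follows essentially the paper's own argument: Poincar\'e (using $\boldsymbol{\phi}(l)=\mathbf 0$, $\partial_s\boldsymbol{\phi}(l)=\mathbf 0$) reduces everything to $\partial_s\boldsymbol{\phi}$ and $\partial_{ss}\boldsymbol{\phi}$, the lower bound $|\partial_s{\mathbf r}_k|\ge 1$ from Lemma~\ref{le:increase} controls the denominators, and the products $g\,\partial_{ss}{\mathbf r}_k$ force $g$ into $L^\infty$ via the one-dimensional Sobolev embedding together with $\|\partial_{ss}{\mathbf r}_k\|_{{L}^2(\Omega_l)}\le K$ from \eqref{eq:boundonr} --- exactly the mechanism by which the paper, too, ends up with $\|g\|_{H^1(\Omega_l)}$ in the first bound (your Poincar\'e variant of the second bound replaces the paper's direct Cauchy--Schwarz on the integral representation \eqref{eq:inverse}; both are immediate). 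For the third estimate you take a genuinely different route. The paper does \emph{not} use the pointwise $\tau$-inequality there: it splits the difference of $\partial_s{\mathbf r}_k/|\partial_s{\mathbf r}_k|^2$ and $\partial_s{\mathbf r}_{k-1}/|\partial_s{\mathbf r}_{k-1}|^2$ algebraically, integrates by parts so that the factor $\partial_s({\mathbf r}_k-{\mathbf r}_{k-1})/\tau$ becomes the discrete velocity $({\mathrm D}{\mathbf r}^\tau)_k$, bounded in $L^2$ by Proposition~\ref{stabestir}; the derivative then falls on $g$ and the coefficients, which is why the paper's bound requires $\|g\|_{H^1(\Omega_l)}$ and why its computation is, in the authors' words, ``a little more lengthly''. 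You instead exploit that ${\mathbf r}_k\in K_k$ gives the pointwise bound $|\partial_s{\mathbf r}_k-\partial_s{\mathbf r}_{k-1}|\le\tau^2$ (trivial for $k=1$ since ${\mathbf r}_1={\mathbf r}_0$), combined with a Lipschitz estimate for $F(\mathbf q)=\mathbf q/|\mathbf q|^2$ on the annulus $\{1\le|\mathbf q|\le 1+T\tau\}$ --- correctly established via the algebraic identity rather than the mean value theorem, since the set is not convex; the constant $c_F$ then depends only on $T$, hence is independent of $N$. The factor $\tau^2/\tau=\tau\le T$ yields a uniform bound of the form $c\,\tau\,\|g\|_{{L}^2(\Omega_l)}$, which is strictly stronger than the third inequality in \eqref{estinverse} demands (it is $O(\tau)$ and needs only the $L^2$-norm of $g$). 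The trade-off: your argument is shorter and sharper but leans on the artificial $\tau$-inequality built into the constraint set \eqref{def:mini} --- entirely legitimate, since the paper itself relies on that inequality in Lemma~\ref{le:increase} and in the elongation bound --- whereas the paper's energy-based argument would survive even if that inequality were relaxed or removed from $K_{k+1}$.
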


\begin{proof} 
Let $g \in \mathcal{H}^1_{0,L}(\Omega_L)$. As consequence of the Poincar$\acute{\rm e}$ inequality,
Lemma \ref{le:increase} and \eqref{eq:boundonr}, we find constants $0 < M_1, M_2, M_3 < \infty$
(independent of $N \in {\mathbb N}$) such that
\begin{align*}
\|(e'_{k+1})^{-1}[g]\|_{\mathcal{H}^2(\Omega_L)} 
&\le M_1 \left\| \partial_s \left(\frac{g \,\partial_s {\mathbf r}_k}{|\partial_s{\mathbf r}_k|^2}\right) \right\|_{\mathcal{L}^2(\Omega_L)}
\le M_1 \left(\|\partial_s g \,\partial_s {\mathbf r}_k \|_{\mathcal{L}^2(\Omega_L)}
+ 3\| g \,\partial_{ss} {\mathbf r}_k \|_{\mathcal{L}^2(\Omega_L)}\right) \\
&\le M_2 \left(\|\partial_s {\mathbf r}_k \|_{\mathcal{C}^0(\overline{\Omega}_L)}
+ 3\|\partial_{ss} {\mathbf r}_k \|_{\mathcal{L}^2(\Omega_L)}\right) \|g\|_{\mathcal{H}^1(\Omega_L)}
\le M_3 \|g\|_{\mathcal{H}^1(\Omega_L)}.
\end{align*}
Using Cauchy--Schwarz inequality and Lemma \ref{le:increase}, we obtain  
\begin{equation*}
\|(e'_{k+1})^{-1}[g]\|^2_{\mathcal{L}^2(\Omega_L)} 
= \int_0^L \left|\int_u^L \frac{g \,\partial_s {\mathbf r}_k}{2|\partial_s{\mathbf r}_k|^2} \,ds \right|^2 du
\le \frac{3L^2}{4} \int_0^L \frac{g^2 |\partial_s {\mathbf r}_k|^2}{|\partial_s{\mathbf r}_k|^4} \,ds
\le \frac{3L^2}{4} \|g\|^2_{\mathcal{L}^2(\Omega_L)}.
\end{equation*}
The estimation of the discrete derivative is a little more lengthly. Integration by parts yields
\begin{align*}
\Big\| \big({\mathrm D}\big((e')^{-1}&\big)^\tau\big)_{k+1}[g] \Big\|^2_{\mathcal{L}^2(\Omega_L)}
= \int_0^L \left|\int_u^L \frac{g \,\partial_s {\mathbf r}_{k}}{2\tau|\partial_s{\mathbf r}_{k}|^2} 
  - \frac{g \,\partial_s {\mathbf r}_{k-1}}{2\tau|\partial_s{\mathbf r}_{k-1}|^2} \,ds \right|^2 du \\
& \le \int_0^L \left|\int_u^L \frac{g \,\partial_s({\mathbf r}_{k}-{\mathbf r}_{k-1})} {\tau|\partial_s{\mathbf r}_{k}|^2} \,ds \right|^2 du
  + \int_0^L \left|\int_u^L g \,\partial_s{\mathbf r}_{k-1} \left(\frac{1}{\tau|\partial_s{\mathbf r}_{k}|^2} 
  - \frac{1}{\tau|\partial_s{\mathbf r}_{k-1}|^2}\right) \,ds \right|^2 du \\
& \le 2\int_0^L \left|\int_u^L \frac{{\mathbf r}_{k}-{\bf r}_{k-1}}{\tau}\,\,\partial_s\!\left(\frac{g}{|\partial_s{\mathbf r}_{k}|^2}\right)\,ds \right|^2 du
  + 2\int_0^L \left| \frac{{\mathbf r}_{k}-{\mathbf r}_{k-1}} {\tau} \frac{g}{|\partial_s{\mathbf r}_{k}|^2} \right|^2 ds \\
& \quad +\int_0^L \left|\int_u^L \frac{g \,\partial_s{\mathbf r}_{k-1}} {\tau|\partial_s{\mathbf r}_{k}|^2|\partial_s{\mathbf r}_{k-1}|^2}
  \,\,\partial_s({\mathbf r}_{k}-{\mathbf r}_{k-1})\cdot\partial_s({\mathbf r}_{k}+{\mathbf r}_{k-1}) \,ds \right|^2 du \\
& \le M_4 \,\|g\|^2_{\mathcal{H}^1(\Omega_L)} +2\int_0^L \left|\int_u^L \frac{{\mathbf r}_{k}
  -{\mathbf r}_{k-1}}{\tau} \cdot \partial_s\!\left(\partial_s({\mathbf r}_{k}+{\mathbf r}_{k-1})\frac{g \,\partial_s{\mathbf r}_{k-1}}
  {|\partial_s{\mathbf r}_{k}|^2|\partial_s{\mathbf r}_{k-1}|^2}\right) \,ds \right|^2 du \\
& \quad +2\int_0^L \left| \frac{{\mathbf r}_{k}-{\mathbf r}_{k-1}}{\tau}\cdot\partial_s({\mathbf r}_{k}+{\mathbf r}_{k-1})
  \frac{g \,\partial_s{\mathbf r}_{k-1}}{|\partial_s{\mathbf r}_{k}|^2|\partial_s{\mathbf r}_{k-1}|^2} \right|^2 ds\\
& \le M_5 \,\|g\|^2_{\mathcal{H}^1(\Omega_L)}
\end{align*} 
for some $0 < M_4, M_5 < \infty$ (independent of $N \in {\mathbb N}$). Here, again, we used Lemma \ref{le:increase} and the estimates provided in Proposition \ref{stabestir} in several steps.
\end{proof}

\begin{proposition}[Stability bound for Lagrange multiplier]
Let $\lambda_{k+1}\in \mathcal{H}^{-1}(\Omega_L)$ be as in \eqref{eq:lambda}, $k=1,...,N-1$, and let $\lambda^\tau$ be the corresponding linear interpolation. Then $\lambda^\tau\in \mathcal{H}^{-1,0}(\Omega)$ and there exists $0 < C < \infty$, independent of $N \in {\mathbb N}$, such that
\begin{align}\label{eq:boundonlambda}
\big|{}_{\mathcal{H}_{0,L,T}^{1}(\Omega)}{\!\langle g \otimes h, \lambda^{\tau}\rangle}_{\mathcal{H}^{-1}(\Omega)}\big| \le C \,\|g\|_{\mathcal{H}^1(\Omega_L)}\|h\|_{\mathcal{H}^1(\Omega_T)}
\end{align}
for all $g \in \mathcal{H}^{1}_{0,L}(\Omega_L)$, $h \in \mathcal{H}^{1}_{0,T}(\Omega_T)$.
\end{proposition}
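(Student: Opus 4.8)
The plan is to substitute the definition \eqref{eq:lambda} of $\lambda_{k+1}$ into the pairing, split it into the three summands coming from $J_{k+1}'$ in \eqref{eq:adj2}, dispatch the bending and force parts by the pointwise-in-time bounds already at hand, and treat the inertia part by a discrete summation by parts in time. Writing $\boldsymbol{\Phi}^{k+1}_g := (e'_{k+1})^{-1}[g] \in {H}^2_{0,l}(\Omega_l;{\mathbb R}^3)$, formulas \eqref{eq:lambda} and \eqref{eq:adj2} give
\[
\lambda_{k+1}[g] = -2\omega\,({\mathrm D}^2_{k+1}{\mathbf r}_{k+1},\boldsymbol{\Phi}^{k+1}_g)_{{L}^2(\Omega_l)} -2b\,(\partial_{ss}{\mathbf r}_{k+1},\partial_{ss}\boldsymbol{\Phi}^{k+1}_g)_{{L}^2(\Omega_l)} +2\,({\mathbf f}_{k+1},\boldsymbol{\Phi}^{k+1}_g)_{{L}^2(\Omega_l)}.
\]
For fixed $N$ each $\lambda_{k+1}\in {H}^{-1}(\Omega_l)$, and since $\lambda^\tau(\cdot,t)$ is the piecewise-linear-in-$t$ interpolation of these finitely many values, the map $t\mapsto \lambda^\tau(\cdot,t)$ lies in ${L}^2(\Omega_T;{H}^{-1}(\Omega_l))$, which under the identifications fixed earlier is exactly $\lambda^\tau\in {H}^{-1,0}(\Omega)$; this settles the membership claim. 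The pairing then unfolds as $_{{H}^1_{0,l,T}(\Omega)}\langle g\otimes h,\lambda^\tau\rangle = \int_0^T \lambda^\tau[g](t)\,h(t)\,dt$, where $\lambda^\tau[g]$ is the piecewise-linear interpolation of the nodal values $\lambda_k[g]$.

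For the bending and force parts I would use the uniform estimates already available: $\|\boldsymbol{\Phi}^{k+1}_g\|_{{H}^2(\Omega_l)}\le L\|g\|_{{H}^1(\Omega_l)}$ from the preceding lemma, $\|\partial_{ss}{\mathbf r}_{k+1}\|_{{L}^2(\Omega_l)}\le K$ from Proposition~\ref{stabestir}, and $\sup_k\|{\mathbf f}_{k+1}\|_{{L}^2(\Omega_l)}<\infty$ by the assumed smoothness of ${\mathbf f}$. By Cauchy--Schwarz in space these bound the bending and force contributions to $\lambda_k[g]$ by $C\|g\|_{{H}^1(\Omega_l)}$ uniformly in $k$ and $N$; integrating against $h$ and using $\|h\|_{{L}^2(\Omega_T)}\le\|h\|_{{H}^1(\Omega_T)}$ produces the claimed bound for these two terms.

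The crux is the inertia contribution, because ${\mathrm D}^2_{k+1}{\mathbf r}_{k+1}=(({\mathrm D}{\mathbf r}^\tau)_{k+1}-({\mathrm D}{\mathbf r}^\tau)_k)/\tau$ carries a second time difference that is \emph{not} controlled uniformly. Here I would perform a discrete summation by parts in time, transferring one difference off ${\mathrm D}{\mathbf r}^\tau$ onto the product $\boldsymbol{\Phi}^{k+1}_g\,h$ via a discrete product rule. This yields (i) terms $\tau\sum_k (({\mathrm D}{\mathbf r}^\tau)_{k+1},\big({\mathrm D}\big((e')^{-1}\big)^\tau\big)_{k+1}[g])_{{L}^2(\Omega_l)}\,h$, controlled by $\max_k\|({\mathrm D}{\mathbf r}^\tau)_k\|_{{L}^2(\Omega_l)}\le K$ together with the third estimate $\|\big({\mathrm D}\big((e')^{-1}\big)^\tau\big)_{k+1}[g]\|_{{L}^2(\Omega_l)}\le L\|g\|_{{H}^1(\Omega_l)}$ of the preceding lemma; (ii) terms $\tau\sum_k (({\mathrm D}{\mathbf r}^\tau)_{k+1},\boldsymbol{\Phi}^{k+1}_g)_{{L}^2(\Omega_l)}\,({\mathrm D}h)_{k+1}$, controlled by the same bound on ${\mathrm D}{\mathbf r}^\tau$, the ${H}^2$-bound on $\boldsymbol{\Phi}^{k+1}_g$, and the elementary discrete estimate $\tau\sum_k|({\mathrm D}h)_{k+1}|^2\le\|h\|_{{H}^1(\Omega_T)}^2$; and (iii) boundary terms, which vanish because $({\mathrm D}{\mathbf r}^\tau)_1={\mathbf 0}$ and $h(T)=0$ (as $h\in {H}^1_{0,T}(\Omega_T)$). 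A discrete Cauchy--Schwarz in time against $\tau\sum_k\|({\mathrm D}{\mathbf r}^\tau)_{k+1}\|^2_{{L}^2(\Omega_l)}\le TK^2$ closes (i) and (ii) by $C\|g\|_{{H}^1(\Omega_l)}\|h\|_{{H}^1(\Omega_T)}$.

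I expect the bookkeeping of the summation by parts to be the main obstacle, precisely because $\boldsymbol{\Phi}^{k+1}_g$ itself depends on the time level through ${\mathbf r}_k$; this is exactly what forces the discrete derivative $\big({\mathrm D}\big((e')^{-1}\big)^\tau\big)_{k+1}$ to appear and is the reason the preceding lemma isolates a uniform bound for it. Some additional care is needed to reconcile the piecewise-linear time interpolation defining $\lambda^\tau[g]$ with the nodal sums produced by the summation by parts, but the resulting discrepancies are of order $\tau$ and are absorbed into the very same bounds, so they do not affect the uniform constant $C$.
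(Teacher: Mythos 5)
Your proposal is correct and matches the paper's argument in all essentials: the same decomposition of $\lambda_{k+1}[g]$ via \eqref{eq:lambda} into inertia, bending and force parts, the same straightforward pointwise bounds for the latter two, and the same discrete summation by parts in time for the inertia term, which produces exactly the discrete derivative $\big({\mathrm D}\big((e')^{-1}\big)^\tau\big)_{k+1}[g]$ controlled by the third estimate in \eqref{estinverse} together with \eqref{eq:boundonr}, with the boundary terms handled via $({\mathrm D}{\mathbf r}^\tau)_1={\mathbf 0}$ and $h(T)=0$. The only cosmetic difference is the reconciliation of the piecewise-linear interpolation with the nodal sums: the paper integrates by parts against the primitives $h^{(-1)}, h^{(-2)}$ of $h$, so that $\big({\rm D}^2h^{(-2)}\big)_{k+1}$ and $\big({\rm D}^3h^{(-2)}\big)_{k+2}$ play the role of your $h_k$ and $({\rm D}h)_{k+1}$, whereas you handle the discrepancy directly --- note that against the inertia part this discrepancy is merely uniformly bounded (by $C\|g\|_{{H}^1(\Omega_l)}\|h\|_{{H}^1(\Omega_T)}$), not $\mathcal{O}(\tau)$, which is however all the proposition requires.
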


\begin{proof}
According to \eqref{eq:lambda}, the Lagrange multiplier is given by
\begin{eqnarray*}
\lambda_{k+1} = -J_{k+1}'({\mathbf r}_{k+1})(e'_{k+1})^{-1}, \quad k = 1, \ldots, N-1.
\end{eqnarray*}
Let $g \in \mathcal{H}^1_{0,L}(\Omega_L)$ and $h \in \mathcal{H}^1_{0,T}(\Omega_T)$. Then, due to the imposed initial and boundary conditions
\begin{align}\label{form1} \nonumber
{}_{\mathcal{H}^{1}_{0,L,T}(\Omega)}&{\!\langle g \otimes h, \lambda^{\tau}\rangle}_{\mathcal{H}^{-1}(\Omega)}
 = \int_0^T {}_{\mathcal{H}^{1}_{0,L}(\Omega_L)}{\!\langle g,\lambda^{\tau}(t)\rangle}_{\mathcal{H}^{-1}(\Omega_L)} h(t) \,dt \\\nonumber
& = \sum_{k=1}^{N-1} \int_{t_k}^{t_{k+1}} \rule[-2ex]{0pt}{0pt}_{{{\mathcal{H}^{1}_{0,L}(\Omega_L)}}}{\!\left\langle g,
  \frac{t-t_{k}}{\tau} ({\lambda}_{k+1}-{\lambda}_{k}) +{\lambda}_{k} \right\rangle}_{\mathcal{H}^{-1}(\Omega_L)} h(t) \,dt \\\nonumber
& = \sum_{k=1}^{N-1} {}_{\mathcal{H}^{1}_{0,L}(\Omega_L)}{\!\langle g,{\lambda}_{k+1} \rangle}_{\mathcal{H}^{-1}(\Omega_L)} h^{(-1)}_{k+1}
 -{}_{\mathcal{H}^{1}_{0,L}(\Omega_L)}{\!\langle g,{\lambda}_{k} \rangle}_{\mathcal{H}^{-1}(\Omega_L)} h^{(-1)}_k \\\nonumber
& \quad - \left( {}_{\mathcal{H}^{1}_{0,L}(\Omega_L)}{\!\langle g,{\lambda}_{k+1} \rangle}_{\mathcal{H}^{-1}(\Omega_L)} 
 -{}_{\mathcal{H}^{1}_{0,L}(\Omega_L)}{\!\langle g,{\lambda}_{k} \rangle}_{\mathcal{H}^{-1}(\Omega_L)} \right) \frac{1}{\tau}\int_{t_k}^{t_{k+1}} h^{(-1)}(t) \,dt \\
& = \tau \sum_{k=2}^{N-1} {}_{\mathcal{H}^{1}_{0,L}(\Omega_L)}{\!\langle g,{\lambda}_{k} \rangle}_{\mathcal{H}^{-1}(\Omega_L)} \big({\rm D}^2h^{(-2)}\big)_{k+1} 
 - {}_{\mathcal{H}^{1}_{0,L}(\Omega_L)}{\!\langle g,{\lambda}_{N} \rangle}_{\mathcal{H}^{-1}(\Omega_L)} \big({\rm D}h^{(-2)}\big)_{N},
\end{align}
where $h^{(-j)}$ is the primitive function of $h^{(-j+1)}$ with $h^{(-j)}(T)=0$, $j = 1,2$, $h^{(0)} = h$.  
Furthermore, 
\begin{align}\label{form2} \nonumber
{}_{\mathcal{H}^{1}_{0,L}(\Omega_L)}{\!\langle g,{\lambda}_{k+1} \rangle}_{\mathcal{H}^{-1}(\Omega_L)} &= -J_{k+1}'({\bf r}_{k+1})(e'_{k+1})^{-1}[g] \\ \nonumber
& = - 2 \Big(\omega \big(({\rm D}^2{\bf r}^\tau)_{k+1}, (e'_{k+1})^{-1}[g] \big)_{\mathcal{L}^2(\Omega_L)} 
  + b \big(\partial_{ss}{\bf r}_{k+1}, \partial_{ss}((e'_{k+1})^{-1}[g]) \big)_{\mathcal{L}^2(\Omega_L)}\\
& \quad \quad \quad - \big({\bf  f}_{k+1}, (e'_{k+1})^{-1}[g] \big)_{\mathcal{L}^2(\Omega_L)}\Big).
\end{align}
Using \eqref{form1} and \eqref{form2}, now we estimate ${}_{\mathcal{H}^{1}_{0,L,T}(\Omega)}{\!\langle g \otimes h,
 \lambda^{\tau}\rangle}_{\mathcal{H}^{-1}(\Omega)}$ term by term. First we consider 
\begin{align*}
\Bigg|\tau \sum_{k=2}^{N-1} \Bigg( \frac{{\bf r}_{k} - 2{\bf r}_{k-1} - {\bf r}_{k-2} }{\tau^2},& (e'_{k})^{-1}[g] \Bigg)_{\mathcal{L}^2(\Omega_L)}
\big({\rm D}^2h^{(-2)}\big)_{k+1} \Bigg| \\
&\le \left|\tau \sum_{k=2}^{N-2} \left( ({\rm D}{\bf r}^\tau)_{k}, \big({\rm D}\big((e')^{-1}\big)^\tau\big)_{k+1}[g] \right)_{\mathcal{L}^2(\Omega_L)} 
  \big({\rm D}^2h^{(-2)}\big)_{k+1} \right| \\
& \quad + \left|\tau \sum_{k=2}^{N-2} \left( ({\rm D}{\bf r}^\tau)_{k}, (e'_{k+1})^{-1}[g] \right)_{\mathcal{L}^2(\Omega_L)} \big({\rm D}^3h^{(-2)}\big)_{k+2} \right| \\
& \quad + \left|\left( ({\rm D}{\bf r}^\tau)_{N-1}, (e'_{N-1})^{-1}[g] \right)_{\mathcal{L}^2(\Omega_L)} \big({\rm D}^2h^{(-2)}\big)_{N} \right| \\
& \le KM \|g\|_{\mathcal{H}^1(\Omega_L)} \,\tau \sum_{k=2}^{N-1} \left(\left|\big({\rm D}^2h^{(-2)}\big)_{k+1} \right| +
 \left|\big({\rm D}^3h^{(-2)}\big)_{k+1} \right| \right) \\
& \le 5\sqrt{T}KM \|g\|_{\mathcal{H}^1(\Omega_L)} \|h\|_{\mathcal{H}^1(\Omega_T)},
\end{align*}
where we used \eqref{eq:boundonr} and \eqref{estinverse}. Since $h^{(-1)}(T)=h(T)=0$, we have
\begin{equation*}
\left|\left( \frac{{\mathbf r}_{N} - 2{\mathbf r}_{N-1} - {\mathbf r}_{N-2} }{\tau^2}, (e'_{N})^{-1}[g] \right)_{\mathcal{L}^2(\Omega_L)}
 \big({\mathrm D}h^{(-2)}\big)_{N} \right| \le C_1 \|g\|_{\mathcal{H}^1(\Omega_L)} \|h\|_{\mathcal{H}^1(\Omega_T)}
\end{equation*}
for some $0 < C_1 < \infty$, independent of $N \in {\mathbb N}$. Using \eqref{eq:boundonr}, \eqref{estinverse}
and the continuity of ${\bf f}$, a derivation of an appropriate bound for the remaining four terms is straight forward.
\end{proof}

%%%%%%%%
\newpage
\subsection{Convergence}

\begin{theorem}\label{th:onconv}
There exists a sequence of discretizations $(\tau_n)_{n \in {\mathbb N}}$, ${\mathbf r} \in \mathcal{H}^{2,1}(\Omega)$ and $\lambda \in \mathcal{H}^{-\beta}(\Omega)$ such that
\begin{subequations}\label{eq:conv}
\begin{align}
\lim_{n\to \infty} {\mathbf r}^{\tau_n} &= {\mathbf r}  \quad \mbox{ in } \quad  \mathcal{C}^{0}([0,T]; \mathcal{L}^2(\Omega_L; {\mathbb R}^3)),\label{eq:conv1} \\
\lim_{n\to \infty} {\mathbf r}^{\tau_n} &= {\mathbf r} \quad \mbox{ weakly in } \quad \mathcal{H}^{2,1}(\Omega; {\mathbb R}^3),\label{eq:conv2}\\
\lim_{n\to \infty} {\mathbf r}^{\tau_n} &= {\mathbf r} \quad \mbox{ strongly  in } \quad \mathcal{L}^2(\Omega; {\mathbb R}^3),\label{eq:conv3} \\
\lim_{n\to \infty} {\lambda}^{\tau_n} &= {\lambda} \quad \mbox{ strongly  in } \quad \mathcal{H}^{-\beta}(\Omega),\label{eq:conv4}
\end{align}
\end{subequations}
for all $3/2 < \beta < \infty$. Furthermore, $({\bf r}, \lambda)$ are weakly solving \eqref{eq:1}, i.e.,
\begin{subequations}\label{eq:existcont}
\begin{align}\label{eq:existcont1}
-\omega \,( \partial_t{\mathbf r}, \partial_t \boldsymbol{\phi} )_{\mathcal{L}^2(\Omega)} 
&= - {}_{\mathcal{H}_{0,L,T}^{1,0}(\Omega)}{\!\langle \partial_s{\mathbf r} \cdot\partial_{s}\boldsymbol{\phi},\lambda \rangle}_{\mathcal{H}^{-1,0}(\Omega)}
- b \,( \partial_{ss}{\mathbf r}, \partial_{ss} \boldsymbol{\phi})_{\mathcal{L}^2(\Omega)}
+({\mathbf  f}, \boldsymbol{\phi})_{\mathcal{L}^2(\Omega)} \\\label{eq:existcont2}
\big(|\partial_s{\mathbf r}|^2,\partial_t \phi \big)_{\mathcal{L}^2(\Omega)} &= 0,
\end{align}
\end{subequations}
for all $\boldsymbol{\phi} \in \mathcal{H}^3_{0,L}(\Omega_L;\mathbb{R}^3) \otimes \mathcal{H}^1_{0,T}(\Omega_T)$ and all ${\phi} \in \mathcal{C}^\infty_c(\Omega)$.
Furthermore, for all $0 \le \gamma <1/2$ 
\begin{align*}
\lim_{n\to \infty} {\bf r}^{\tau_n}(t) &= {\bf r}(t)  \quad \quad \mbox{ in } \quad  \mathcal{C}^{0, \gamma}([0,L]; {\mathbb R}^3), \\
\lim_{n\to \infty} \partial_s{\bf r}^{\tau_n}(t) &= \partial_s{\bf r}(t) \quad \mbox{ in } \quad  \mathcal{C}^{0, \gamma}([0,L]; {\mathbb R}^3),
\end{align*}
for all $t \in D$, where $D \subset [0,T]$ is countable (in the following we are choosing $D \subset [0,T]$ dense and let
$0, T \in {D}$).
Moreover, ${\mathbf r}$ has a (unique) continuous version (denoted by the same symbol) and
\begin{align*}
{\mathbf r}(L,t) = {\mathbf 0}, \quad \partial_s{\mathbf r}(L,t) = -\mathbf{e_g},
\quad {\mathbf r}(s,0) = (L-s)\mathbf{e_g} \quad \mbox{ for all } (s,t) \in [0,L] \times [0,T],
\end{align*}
and even
\begin{align}\label{algconst}
|\partial_s{\mathbf r}(s,t)|^2 = 1 \,\mbox{ for a.e. }\, (s,t) \in [0,L] \times [0,T].
\end{align}
\end{theorem}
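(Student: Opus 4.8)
The plan is to realize $(\mathbf{r},\lambda)$ as limits of a subsequence of the discrete solutions from Theorem~\ref{th:exist} and to pass to the limit in their weak formulation. First I would extract subsequences. By the uniform bound $\|\mathbf{r}^\tau\|_{H^{2,1}(\Omega)}\le K$ of Proposition~\ref{stabestir} and reflexivity of $H^{2,1}(\Omega;\mathbb{R}^3)$, a subsequence $(\tau_n)$ satisfies $\mathbf{r}^{\tau_n}\rightharpoonup\mathbf{r}$ weakly in $H^{2,1}(\Omega;\mathbb{R}^3)$, which is \eqref{eq:conv2}. The bounds in \eqref{eq:boundonr} show that $\mathbf{r}^{\tau_n}$ is bounded in $L^\infty(0,T;H^2(\Omega_l;\mathbb{R}^3))$ with $\partial_t\mathbf{r}^{\tau_n}$ bounded in $L^\infty(0,T;L^2(\Omega_l;\mathbb{R}^3))$; since $H^2(\Omega_l)\hookrightarrow\hookrightarrow L^2(\Omega_l)$ compactly, an Aubin--Lions/Arzel\`a--Ascoli argument yields strong convergence in $C^0([0,T];L^2(\Omega_l;\mathbb{R}^3))$, i.e.\ \eqref{eq:conv1}, and hence \eqref{eq:conv3}. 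For the multiplier, the stability bound \eqref{eq:boundonlambda} gives a uniform bound of $\lambda^{\tau_n}$ in $H^{-1,-1}(\Omega)$; weak-$*$ compactness provides a weak-$*$ limit $\lambda$ there, and the dual of the compact embedding $H^{\beta,\beta}_{0,l,T}(\Omega)\hookrightarrow\hookrightarrow H^{1,1}_{0,l,T}(\Omega)$, $\beta>3/2$, upgrades this to strong convergence $\lambda^{\tau_n}\to\lambda$ in $H^{-\beta}(\Omega)$, which is \eqref{eq:conv4}. Finally, for fixed $t$ the interpolant $\mathbf{r}^{\tau_n}(t)$ is bounded in $H^2(\Omega_l;\mathbb{R}^3)$, so the compact embedding $H^2(\Omega_l)\hookrightarrow\hookrightarrow C^{1,\gamma}([0,l])$, $\gamma<1/2$, together with a diagonal extraction over a countable dense set $D\ni 0,T$, gives the pointwise-in-time convergences of $\mathbf{r}^{\tau_n}(t)$ and $\partial_s\mathbf{r}^{\tau_n}(t)$ in $C^{0,\gamma}([0,l];\mathbb{R}^3)$, the limit being identified through \eqref{eq:conv1}.

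Next I would pass to the limit in the discrete weak equation \eqref{eq:exist1} summed over the time levels; by density it suffices to treat test functions of tensor form $\boldsymbol\phi\otimes h$ with $\boldsymbol\phi\in H^3_{0,l}(\Omega_l;\mathbb{R}^3)$ and $h\in H^1_{0,T}(\Omega_T)$. The three linear terms are routine. For the inertia term I would perform a discrete summation by parts in time, converting the sum of $\omega(({\mathrm D}^2\mathbf{r}^\tau)_{k+1},\boldsymbol\phi)_{L^2(\Omega_l)}$ into $-\omega(\partial_t\mathbf{r}^{\tau_n},\partial_t(\boldsymbol\phi\otimes h))_{L^2(\Omega)}$ up to boundary contributions that vanish because $h(T)=0$ and $({\mathrm D}\mathbf{r}^\tau)_1=\mathbf 0$; weak $L^2(\Omega)$-convergence of $\partial_t\mathbf{r}^{\tau_n}$ then produces the inertia term of \eqref{eq:existcont1}. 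The bending term passes to $-b(\partial_{ss}\mathbf{r},\partial_{ss}\boldsymbol\phi)_{L^2(\Omega)}$ by weak convergence of $\partial_{ss}\mathbf{r}^{\tau_n}$, and the source term converges to $(\mathbf{f},\boldsymbol\phi)_{L^2(\Omega)}$ as a Riemann sum using continuity of $\mathbf{f}$.

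The main obstacle is the multiplier term ${}_{H^1_{0,l}(\Omega_l)}\langle\partial_s\mathbf{r}_k\cdot\partial_s\boldsymbol\phi,\lambda_{k+1}\rangle_{H^{-1}(\Omega_l)}$: it is a product of the two quantities $\partial_s\mathbf{r}^{\tau_n}$ and $\lambda^{\tau_n}$, for which only weak-type bounds are available at the dual level $H^{1}/H^{-1}$, so a plain weak$\times$weak passage is inadmissible. The resolution combines two compactness gains: the strong convergence $\lambda^{\tau_n}\to\lambda$ established above and the strong convergence of $\partial_s\mathbf{r}^{\tau_n}$ obtained from the compact embedding $H^{2,1}(\Omega)\hookrightarrow\hookrightarrow C^0(\overline\Omega)$. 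The explicit, time-lagged discretization of this term, carrying $\mathbf{r}_k$ rather than $\mathbf{r}_{k+1}$, must be handled by showing that the time shift of the interpolant produces an error tending to $0$; the enhanced regularity $\boldsymbol\phi\in H^3_{0,l}$ ensures that $\partial_s\mathbf{r}\cdot\partial_s\boldsymbol\phi$ lies in the predual $H^{1,0}_{0,l,T}(\Omega)$, so that the limiting pairing $\langle\partial_s\mathbf{r}\cdot\partial_s\boldsymbol\phi,\lambda\rangle_{H^{1,0}_{0,l,T}(\Omega),H^{-1,0}(\Omega)}$ is well defined, while the surjectivity of the linearized constraint (Proposition~\ref{th:exLM}) is what guarantees that the limit functional $\lambda$ indeed belongs to $H^{-1,0}(\Omega)$. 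Assembling the four limits yields \eqref{eq:existcont1}.

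It remains to identify the data and the constraint. The membership $\mathbf{r}_k\in V$ encodes the conditions at $s=l$ and the initial value $\mathbf{r}_0=(l-s)\mathbf{e_g}$; together with the pointwise-in-time $C^{0,\gamma}$-convergence on $D$ (with $0\in D$) and continuity of the trace operators, these transfer to $\mathbf{r}$, and the combination of \eqref{eq:conv1} with the uniform pointwise $H^2$-bounds furnishes the continuous version. For the inextensibility, Lemma~\ref{le:increase} gives the squeeze $1\le|\partial_s\mathbf{r}_k(s)|\le 1+T\tau_n$; letting $\tau_n\to0$ and using the strong $C^{0,\gamma}([0,l])$-convergence of $\partial_s\mathbf{r}^{\tau_n}(t)$ for $t\in D$ forces $|\partial_s\mathbf{r}(s,t)|=1$ for all $s\in[0,l]$ and $t\in D$; density of $D$ and the continuity of $\mathbf{r}$ extend this to a.e.\ $(s,t)$, which is \eqref{algconst}. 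The weak constraint \eqref{eq:existcont2} then follows at once, since $\int_\Omega|\partial_s\mathbf{r}|^2\,\partial_t\phi=\int_\Omega\partial_t\phi=0$ for every $\phi\in C^\infty_c(\Omega)$. I expect the multiplier passage of the third paragraph to be by far the most delicate point, the recovery of \eqref{algconst} from the $\tau$-dependent squeeze being the second.
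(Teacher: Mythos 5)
Your overall architecture coincides with the paper's proof (subsequence extraction from Proposition \ref{stabestir}, Arzel\`a--Ascoli for \eqref{eq:conv1}, discrete summation by parts for the inertia term, strong-$\lambda$-times-weak-$\partial_s{\mathbf r}$ for the multiplier term, diagonal extraction over $D$), but the extraction of $\lambda$ contains a genuine gap. The stability bound \eqref{eq:boundonlambda} controls $\lambda^{\tau}$ only on elementary tensors $g \otimes h$, i.e., it says the associated bilinear forms on ${H}^1_{0,l}(\Omega_l) \times {H}^1_{0,T}(\Omega_T)$ are uniformly \emph{bounded}. Since ${H}^{1,1}_{0,l,T}(\Omega)$ is defined as the Hilbert-space tensor product completion, its dual ${H}^{-1,-1}(\Omega)$ consists of \emph{Hilbert--Schmidt} forms, and a bounded form need not be Hilbert--Schmidt; so your claimed uniform bound of $\lambda^{\tau_n}$ in ${H}^{-1,-1}(\Omega)$ does not follow from \eqref{eq:boundonlambda}, and the weak-$*$ extraction there fails as stated. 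The missing idea is precisely the paper's kernel-theorem step: the embedding ${H}^{\beta_1}(\Omega_a) \subset {H}^{1}(\Omega_a)$ is Hilbert--Schmidt for $\beta_1 > 3/2$ in one dimension, so composing the bounded forms with these embeddings makes them uniformly Hilbert--Schmidt, i.e., $\lambda^{\tau}$ is uniformly bounded in ${H}^{-\beta_1}(\Omega)$, and only then does compactness of ${H}^{-\beta_1}(\Omega) \subset {H}^{-\beta}(\Omega)$ give \eqref{eq:conv4}. The threshold $3/2$ in the theorem is exactly this Hilbert--Schmidt threshold; under your reading any $\beta > 1$ would suffice, which is a signal that the intermediate bound is too strong. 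Relatedly, your assertion that surjectivity of $e_{k+1}'$ guarantees $\lambda \in {H}^{-1,0}(\Omega)$ is unsupported and is not what the paper proves: surjectivity yields the \emph{discrete} multipliers $\lambda_{k+1} \in {H}^{-1}(\Omega_l)$, but no uniform ${H}^{-1,0}$ bound passes to the limit; the paper keeps only $\lambda \in {H}^{-\beta}(\Omega)$ and interprets the pairing in \eqref{eq:existcont1} as a uniquely closable bilinear form, Remark \ref{rm:onconv}(iii).

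Two further, more local defects. First, strong convergence of $\partial_s{\mathbf r}^{\tau_n}$ does not follow from compactness of ${H}^{2,1}(\Omega) \hookrightarrow C^0(\overline{\Omega})$ --- that embedding concerns ${\mathbf r}$, not its $s$-derivative; the paper needs no such strong convergence, pairing instead the weakly convergent factor $\partial_s{\mathbf r}^{\tau_n}(\cdot-\tau_n)\cdot\partial_s\boldsymbol{\phi}$ in ${H}^{1,0}_{0,l,T}(\Omega)$ (this is where $\boldsymbol{\phi} \in {H}^3_{0,l}$ enters, to make the product admissible) against $\lambda^{\tau_n} \to \lambda$ strongly in ${H}^{-2}(\Omega)$, in the closable-form sense. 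If you insist on strong convergence of $\partial_s{\mathbf r}^{\tau_n}$, you must build it from the Lipschitz-in-${L}^2$ plus bounded-in-${H}^2$ estimates via interpolation and Arzel\`a--Ascoli, not from the stated embedding. Second, your route to \eqref{algconst} via the squeeze $1 \le |\partial_s{\mathbf r}_k| \le 1+T\tau_n$ and convergence on $D$ is sound \emph{on} $D$, but the extension to a.e.\ $(s,t)$ by ``density of $D$ and continuity of ${\mathbf r}$'' has a hole: continuity of ${\mathbf r}$ gives no control of $t \mapsto \partial_s{\mathbf r}(\cdot,t)$; you would need, e.g., $\partial_{st}{\mathbf r} \in {L}^2(\Omega)$ (available from ${\mathbf r} \in {H}^{2,1}(\Omega)$) to get continuity of $t \mapsto \partial_s{\mathbf r}(t)$ in ${L}^2(\Omega_l)$ before extending off $D$. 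The paper instead proves \eqref{eq:existcont2} first --- via the quantitative bound obtained from \eqref{eq:exist2} and an integration by parts that trades the weakly convergent square for strongly convergent factors --- and then deduces \eqref{algconst} from the resulting time-independence of $|\partial_s{\mathbf r}|^2$ together with the initial condition at $t=0 \in D$; your reversed order (constraint first, \eqref{eq:existcont2} trivially afterwards) is viable once this continuity step is patched, and is arguably more direct, but as written it is incomplete.
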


\begin{remark}\label{rm:onconv}
The pairing ${}_{\mathcal{H}_{0,L,T}^{1,0}(\Omega)}{\!\langle \partial_s{\mathbf r} \cdot\partial_{s}\boldsymbol{\phi}, \lambda \rangle}_{\mathcal{H}^{-1,0}(\Omega)}$ in \eqref{eq:existcont1} has to be understood in the following sense: $(\partial_s{\bf r}^{\tau_n}(\cdot - \tau_n)\cdot\partial_{s}\boldsymbol{\phi})_{n \in {\mathbb N}}$
is a sequence in $\mathcal{H}_{0,L,T}^{1,0}(\Omega)$ weakly convergent to $\partial_s{\mathbf r} \cdot\partial_{s}\boldsymbol{\phi}$
in $\mathcal{H}_{0,L,T}^{1,0}(\Omega)$,
$({\lambda}^{\tau_n})_{n \in {\mathbb N}}$ is a sequence in $\mathcal{H}_{0,L,T}^{-1,0}(\Omega)$ strongly convergent to ${\lambda}$
in $\mathcal{H}^{-\beta}(\Omega)$ for all $3/2 < \beta < \infty$ and the limit
$\lim_{n \to \infty} {}_{\mathcal{H}_{0,L,T}^{1,0}(\Omega)}{\!\langle \partial_s{\mathbf r}^{\tau_n} \cdot\partial_{s}\boldsymbol{\phi},
\lambda^{\tau_n} \rangle}_{\mathcal{H}^{-1,0}(\Omega)} \in {\mathbb R}$ exists. Hence we set
\begin{equation*}
{}_{\mathcal{H}_{0,L,T}^{1,0}(\Omega)}{\!\langle \partial_s{\mathbf r} \cdot\partial_{s}\boldsymbol{\phi}, \lambda \rangle}_{\mathcal{H}^{-1,0}(\Omega)}
:= \lim_{n \to \infty} {}_{\mathcal{H}_{0,L,T}^{1,0}(\Omega)}{\!\langle \partial_s{\mathbf r}^{\tau_n}(\cdot - \tau_n) \cdot\partial_{s}\boldsymbol{\phi},
\lambda^{\tau_n} \rangle}_{\mathcal{H}^{-1,0}(\Omega)}.
\end{equation*}
\end{remark}

\begin{proof}
From the estimates in \eqref{eq:boundonr} we can conclude that ${\bf r}^{\tau}$ is uniformly (in $N \in {\mathbb N}$) 
Lipschitz continuous in $\mathcal{C}^{0}([0,T]; \mathcal{L}^2(\Omega_L; {\mathbb R}^3))$ and
\begin{align*}
\{{\mathbf r}^{\tau}(t) \,|\, t \in [0,T]\} \subset \{{\mathbf v} \in \mathcal{H}^2(\Omega_L; {\mathbb R}^3) \,|\,
\|\partial_{ss}{\mathbf v}\|_{\mathcal{L}^{2}(\Omega_L)}\le K \},
\end{align*}
which is a relative compact subset of $\mathcal{L}^2(\Omega_L; {\mathbb R}^3)$. Thus, there exists sequence of discretizations $(\tau_n)_{n \in {\mathbb N}}$ and ${\bf r} \in \mathcal{C}^{0}([0,T]; \mathcal{L}^2(\Omega_L; {\mathbb R}^3))$ such that ${\mathbf r}^{\tau_n}$ converges to ${\mathbf r}$ in $\mathcal{C}^{0}([0,T]; \mathcal{L}^2(\Omega_L; {\mathbb R}^3))$ for $n \to \infty$.

The first estimate in \eqref{eq:boundonfullr} gives the existence of a subsequence $(\tau_n)_{n \in {\mathbb N}}$ (denoted the same) and $\tilde{\mathbf r} \in \mathcal{H}^{2,1}(\Omega; {\mathbb R}^3)$ such that ${\mathbf r}^{\tau_n}$ converges weakly to $\tilde{\mathbf r}$ in $\mathcal{H}^{2,1}(\Omega; {\mathbb R}^3)$ for $n \to \infty$. Since
convergence in $\mathcal{C}^{0}([0,T]; \mathcal{L}^2(\Omega_L; {\mathbb R}^3))$ implies strong convergence in $\mathcal{L}^{2}(\Omega; {\mathbb R}^3)$ as well as weak convergence in $\mathcal{H}^{2,1}(\Omega; {\mathbb R}^3)$ implies weak convergence in $\mathcal{L}^{2}(\Omega; {\mathbb R}^3)$, we have $\tilde{\mathbf r}={\mathbf r}$. In particular, this shows \eqref{eq:conv1}-\eqref{eq:conv3}. 

From \eqref{eq:boundonlambda} together with the fact that the embedding $\mathcal{H}^{\beta_1}(\Omega_a) \subset \mathcal{H}^{1}(\Omega_a)$, $a \in \{L,T\}$, is Hilbert--Schmidt
for all $3/2 < \beta_1 < \infty$, we obtain by the kernel theorem, see e.g.~\cite[Chap.~1, \S 2.3]{berezansky:b:1995}, that ${\lambda}^{\tau}$ is uniformly
(in $N \in {\mathbb N}$) bounded in $\mathcal{H}^{-\beta_1}(\Omega)$ for all $3/2 < \beta_1 < \infty$.
Since the embedding $\mathcal{H}^{-\beta_1}(\Omega) \subset \mathcal{H}^{-\beta_1-\beta_2}(\Omega)$ is compact for all $0< \beta_2 < \infty$, there exists a subsequence
$(\tau_n)_{n \in {\mathbb N}}$ and $\lambda \in \mathcal{H}^{-\beta}(\Omega)$ such that ${\lambda}^{\tau_n}$ converges strongly to ${\lambda}$ in $\mathcal{H}^{-\beta}(\Omega)$
as $n \to \infty$ for all $3/2 < \beta < \infty$. 

Multiplying the linear interpolation of \eqref{eq:exist1} with a time-dependent test function and integrating w.r.t.\ time yields
\begin{align}\label{eq:solproof1} \nonumber
\omega \,\big(\big({\rm D}^2 {\bf r}^{\tau}\big)^\tau,\boldsymbol{\phi} \big)_{\mathcal{L}^2(\Omega)} 
&= - {}_{\mathcal{H}_{0,L,T}^1(\Omega)}{\!\langle \partial_{s}{\bf r}^\tau(\cdot-\tau) \cdot\partial_{s}\boldsymbol{\phi},
 \lambda^\tau\rangle}_{\mathcal{H}^{-1}(\Omega)} \\ 
& \quad - b \,( \partial_{ss}{\bf r}^\tau, \partial_{ss} \boldsymbol{\phi})_{\mathcal{L}^2(\Omega)}+({\bf  f}^\tau, \boldsymbol{\phi})_{\mathcal{L}^2(\Omega)},
\end{align}
for all $\boldsymbol{\phi} \in \mathcal{H}^2_{0,L}(\Omega_L;\mathbb{R}^3) \otimes \mathcal{H}^1_{0,T}(\Omega_T)$
(because $\lambda_0 = \lambda_1 = 0$, on $[-\tau, 0]$ we can assign to the function $\partial_{s}{\mathbf r}^\tau(\cdot-\tau)$ any value, for simplicity we choose zero). Since $\partial_{ss}{\mathbf r}^{\tau_n}$ converges weakly to $\partial_{ss}{\mathbf r}$ and ${\mathbf f}^{\tau_n}$ converges strongly to ${\mathbf f}$, both in $\mathcal{L}^2(\Omega; {\mathbb R}^3)$, as $n \to \infty$, we have
\begin{equation}\label{eq:solproof2}
\lim_{n \to \infty} ( \partial_{ss}{\mathbf r}^{\tau_{n}}, \partial_{ss} \boldsymbol{\phi})_{\mathcal{L}^2(\Omega)} = ( \partial_{ss}{\mathbf r}, \partial_{ss} \boldsymbol{\phi})_{\mathcal{L}^2(\Omega)}, \qquad \lim_{n \to \infty} ({\mathbf  f}^{\tau_n}, \boldsymbol{\phi})_{\mathcal{L}^2(\Omega)}
  = ({\mathbf  f}, \boldsymbol{\phi})_{\mathcal{L}^2(\Omega)},
\end{equation}
for all $\boldsymbol{\phi} \in \mathcal{H}^2_{0,L}(\Omega_L;\mathbb{R}^3) \otimes \mathcal{H}^1_{0,T}(\Omega_T)$.
Furthermore, integration by parts yields
\begin{align}\label{eq:solproof3} \nonumber
\big(\big({\rm D}^2 & {\bf r}^{\tau}\big)^\tau,\boldsymbol{\phi} \big)_{\mathcal{L}^2(\Omega)} 
= \sum_{k=0}^{N-1} \int_{t_k}^{t_{k+1}} \int_0^L \bigg(\frac{t-t_k}{\tau}
  \Big(({\rm D}^2{\bf r}^\tau)_{k+1} - ({\rm D}^2{\bf r}^\tau)_{k}\Big) + ({\rm D}^2{\bf r}^\tau)_{k} \bigg) \cdot \boldsymbol{\phi} \,ds\,dt \\\nonumber
& = \sum_{k=0}^{N-1} \int_{t_k}^{t_{k+1}} \int_0^L \frac{1}{\tau}\bigg(\bigg(\frac{t-t_k}{\tau}
  \Big(({\rm D}{\bf r}^\tau)_{k+1} - ({\rm D}{\bf r}^\tau)_{k}\Big) + ({\rm D}{\bf r}^\tau)_{k} \bigg) \\\nonumber
& \quad -\bigg(\frac{t-t_k}{\tau} \Big(({\rm D}{\bf r}^\tau)_{k} - ({\rm D}{\bf r}^\tau)_{k-1}\Big) + ({\rm D}{\bf r}^\tau)_{k-1} \bigg)\bigg)
  \cdot \boldsymbol{\phi} \,ds\,dt \\\nonumber
& = - \sum_{k=0}^{N-2} \int_{t_k}^{t_{k+1}} \int_0^L \bigg(\frac{t-t_k}{\tau}
  \Big(({\rm D}{\bf r}^\tau)_{k+1} - ({\rm D}{\bf r}^\tau)_{k}\Big) + ({\rm D}{\bf r}^\tau)_{k} \bigg)
  \cdot \frac{\boldsymbol{\phi}(\cdot + \tau) - \boldsymbol{\phi}}{\tau} \,ds\,dt \\\nonumber
& \quad + \int_{t_{N-1}}^{t_N} \int_0^L \frac{1}{\tau}\bigg(\frac{t-t_{N-1}}{\tau}
  \Big(({\rm D}{\bf r}^\tau)_{N} - ({\rm D}{\bf r}^\tau)_{N-1}\Big) + ({\rm D}{\bf r}^\tau)_{N-1} \bigg)
  \cdot \boldsymbol{\phi} \,ds\,dt \\\nonumber
& = \int_0^L \Big(({\rm D}{\bf r}^\tau)_{N} - ({\rm D}{\bf r}^\tau)_{N-1}\Big)
  \cdot \frac{1}{\tau^2} \int_{t_{N-1}}^{t_N} \int_\cdot^{t_N} \boldsymbol{\phi} \,du\,dt\,ds
  + \int_0^L ({\rm D}{\bf r}^\tau)_{N-1} \cdot \frac{1}{\tau} \int_{t_{N-1}}^{t_N} \boldsymbol{\phi} \,dt\,ds \\
& \quad - \int_{t_1}^{t_{N-1}} \int_0^L \big({\rm D} {\bf r}^{\tau}\big)^\tau
  \cdot \frac{\boldsymbol{\phi}(\cdot + \tau) - \boldsymbol{\phi}}{\tau} \,ds\,dt.
\end{align}
By \eqref{eq:boundonr} together with the boundary conditions imposed on $\boldsymbol{\phi}$ now from \eqref{eq:solproof3} it follows
\begin{equation}\label{eq:solproof4}
\lim_{n \to \infty} \big(\big({\rm D}^2 {\bf r}^{\tau_n}\big)^{\tau_n},\boldsymbol{\phi} \big)_{\mathcal{L}^2(\Omega)} 
= - \lim_{n \to \infty} \int_{t_1}^{t_{N-1}} \int_0^L \big({\rm D} {\bf r}^{\tau_n}\big)^{\tau_n}
\cdot \frac{\boldsymbol{\phi}(\cdot + \tau_n) - \boldsymbol{\phi}}{\tau_n} \,ds\,dt
= - \big(\partial_t{\bf r},\partial_t\boldsymbol{\phi} \big)_{\mathcal{L}^2(\Omega)},
\end{equation}
where in the last step we used that $\partial_t {\bf r}^{\tau_n}$ converges weakly to 
$\partial_t {\bf r}$, $(\boldsymbol{\phi}(\cdot + \tau_n) - \boldsymbol{\phi})/\tau_n$ converges strongly to $\partial_t \boldsymbol{\phi}$ and 
$\partial_t {\bf r}^{\tau_n} - \big({\rm D} {\bf r}^{\tau_n}\big)^{\tau_n}$ converges strongly to $0$, all in $\mathcal{L}^2(\Omega; {\mathbb R}^3)$ as $n \to \infty$.

Combining \eqref{eq:solproof1}, \eqref{eq:solproof2}, \eqref{eq:solproof4} we obtain
\begin{align}\label{limitexist} \nonumber
\lim_{n \to \infty} {}_{\mathcal{H}_{0,L,T}^{1,0}(\Omega)}{\!\langle \partial_{s}{\bf r}^{\tau_n}(\cdot-\tau_n)
\cdot\partial_{s}\boldsymbol{\phi}, \lambda^{\tau_n}\rangle}_{\mathcal{H}^{-1,0}(\Omega)} 
& = \omega \,\big(\partial_t{\bf r},\boldsymbol{\phi} \big)_{\mathcal{L}^2(\Omega)} 
  - b \,( \partial_{ss}{\bf r}, \partial_{ss} \boldsymbol{\phi})_{\mathcal{L}^2(\Omega)}\\
& \quad  +({\bf  f}, \boldsymbol{\phi})_{\mathcal{L}^2(\Omega)},
\end{align}
for all $\boldsymbol{\phi} \in \mathcal{H}^2_{0,L}(\Omega_L;\mathbb{R}^3) \otimes \mathcal{H}^1_{0,T}(\Omega_T)$.
Now we restrict ourself to $\boldsymbol{\phi} \in \mathcal{H}^3_{0,L}(\Omega_L;\mathbb{R}^3) \otimes \mathcal{H}^1_{0,T}(\Omega_T)$. Since $\partial_{s}{\bf r}^{\tau_n}$ converges weakly to $\partial_{s}{\mathbf r}$ in $\mathcal{H}_{0,L,T}^{1,0}(\Omega; {\mathbb R}^3)$ and $\partial_s\boldsymbol{\phi}, \partial_{ss}\boldsymbol{\phi}$ are
bounded functions, also $\partial_{s}{\mathbf r}^{\tau_n}(\cdot-\tau_n) \cdot\partial_{s}\boldsymbol{\phi}$ converges weakly to $\partial_{s}{\mathbf r} \cdot\partial_{s}\boldsymbol{\phi}$ in $\mathcal{H}_{0,L,T}^{1,0}(\Omega)$ as $n \to \infty$. Furthermore, $\lambda^{\tau_n}$ converges strongly to $\lambda$
in $\mathcal{H}^{-\beta}(\Omega)$ as $n \to \infty$ for all $3/2 < \beta < \infty$. Thus we identify 
\begin{equation}\label{limitright}
\lim_{n \to \infty} {}_{\mathcal{H}_{0,L,T}^{1,0}(\Omega)}{\!\langle \partial_{s}{\mathbf r}^{\tau_n}(\cdot-\tau_n)
\cdot\partial_{s}\boldsymbol{\phi}, \lambda^{\tau_n}\rangle}_{\mathcal{H}^{-1,0}(\Omega)}
= {}_{\mathcal{H}_{0,L,T}^{1,0}(\Omega)}{\!\langle \partial_{s}{\mathbf r} \cdot\partial_{s}\boldsymbol{\phi},
\lambda \rangle}_{\mathcal{H}^{-1,0}(\Omega)},
\end{equation}
for all $\boldsymbol{\phi} \in \mathcal{H}^3_{0,L}(\Omega_L;\mathbb{R}^3) \otimes \mathcal{H}^1_{0,T}(\Omega_T)$ in the sense of Remark \ref{rm:onconv}.
Hence, \eqref{eq:existcont1} follows from \eqref{limitexist} together with \eqref{limitright}.

Now, using \eqref{eq:exist2}, we obtain for all ${\phi} \in \mathcal{C}^\infty_c(\Omega)$
\begin{align}\label{preweakbd} \nonumber
\big|\big(|\partial_s{\bf r}^{\tau}|^2,\partial_t \phi \big)_{\mathcal{L}^2(\Omega)}\big|
&= \bigg|\sum_{k=0}^{N-1} \int_{t_k}^{t_{k+1}} \int_0^L \bigg( \frac{t-t_k}{\tau}
\Big( \partial_s{\bf r}_{k+1} - \partial_s{\bf r}_{k} \Big) + \partial_s{\bf r}_{k} \bigg)^2 \partial_t \phi \,ds\,dt \bigg|\\\nonumber
&= \bigg|\frac{2}{\tau} \sum_{k=0}^{N-1} \int_{t_k}^{t_{k+1}} \int_0^L \bigg( \frac{t-t_k}{\tau}
\Big( \partial_s{\bf r}_{k+1} - \partial_s{\bf r}_{k} \Big) + \partial_s{\bf r}_{k} \bigg)
\Big( \partial_s{\bf r}_{k+1} - \partial_s{\bf r}_{k} \Big) \phi \,ds\,dt \bigg|\\\nonumber
&= 2 \sum_{k=0}^{N-1} \int_{t_k}^{t_{k+1}} \int_0^L \frac{t-t_k}{\tau^2}
\Big( |\partial_s{\bf r}_{k+1}|^2 - |\partial_s{\bf r}_{k}|^2 \Big) |\phi| \,ds\,dt \\\nonumber
&\le \|\phi\|_{\mathcal{C}^0(\overline{\Omega})} \sum_{k=0}^{N-1} \int_{0}^{L} \Big( |\partial_s{\bf r}_{k+1}|^2 - |\partial_s{\bf r}_{k}|^2 \Big) \,ds \\
& = \|\phi\|_{\mathcal{C}^0(\overline{\Omega})} \int_{0}^{L} \Big( |\partial_s{\bf r}_{N}|^2 - 1|^2 \Big) \,ds
\le R \|\phi\|_{\mathcal{C}^0(\overline{\Omega})} \sqrt{\tau}
\end{align}
due to Lemma \ref{le:increase} and Proposition \ref{estimateconstraint}. Because ${\bf r}^{\tau_n}$ converges strongly to ${\mathbf r}$ and $\partial_s{\mathbf r}^{\tau_n}$, $\partial_{ss}{\mathbf r}^{\tau_n}$ converge weakly to $\partial_s{\mathbf r}$, $\partial_{ss}{\mathbf r}$, respectively, in $\mathcal{L}^2(\Omega; {\mathbb R}^3)$ as $n \to \infty$, by an integration by parts together with \eqref{preweakbd} we can conclude
\begin{align}\label{weakzero} \nonumber
\big(|\partial_s{\bf r}|^2,\partial_t \phi \big)_{\mathcal{L}^2(\Omega)}
&= - \big({\bf r} \cdot \partial_s{\bf r},\partial_{st} \phi \big)_{\mathcal{L}^2(\Omega)} 
- \big({\bf r} \cdot \partial_{ss}{\bf r}, \partial_t \phi \big)_{\mathcal{L}^2(\Omega)}\\\nonumber
&= - \lim_{n \to \infty}\big(\partial_{st} \phi \,\, {\bf r}^{\tau_n}, \partial_s{\bf r}^{\tau_n} \big)_{\mathcal{L}^2(\Omega)} 
- \lim_{n \to \infty}\big(\partial_{t} \phi \,\, {\bf r}^{\tau_n}, \partial_{ss}{\bf r}^{\tau_n} \big)_{\mathcal{L}^2(\Omega)}\\
&= \lim_{n \to \infty}\big(|\partial_s{\bf r}^{\tau_n}|^2,\partial_t \phi \big)_{\mathcal{L}^2(\Omega)} = 0
\end{align}
for all ${\phi} \in \mathcal{C}^\infty_c(\Omega)$, i.e., \eqref{eq:existcont2} is shown.

Due to the second estimate in \eqref{eq:boundonr}, for each $t \in [0,T]$ there exists a subsequence $(\tau_n)_{n \in {\mathbb N}}$ (depending on $t$) such that
\begin{equation}\label{eq:pointwise}
\lim_{n\to \infty} {\mathbf r}^{\tau_n}(t) = {\mathbf r}(t), \qquad
\lim_{n\to \infty} \partial_s{\mathbf r}^{\tau_n}(t) = \partial_s{\mathbf r}(t) \qquad \mbox{ both in } \mathcal{C}^{0, \gamma}([0,L]; {\mathbb R}^3),
\end{equation}
for all $0 \le \gamma < 1/2$. Let $D \subset [0,T]$ be countable. Then, by dropping to subsequences and taking the diagonal sequence, we obtain \eqref{eq:pointwise} for all $t \in D$. Here we choose $D \subset [0,T]$ dense with $0, T \in [0,T]$. From this, together with the estimates in \eqref{eq:boundonr}, we can conclude that ${\bf r}$ has a (unique) continuous version on $[0,L] \times [0,T]$ (which we denote by the same symbol). Moreover,
\begin{align}\label{tzero}
{\mathbf r}(L,t) = {\mathbf 0}, \qquad \partial_s{\mathbf r}(L,t) = -\mathbf{e_g},
\qquad {\mathbf r}(s,0) = (L-s) \mathbf{e_g} \qquad \mbox{ for all } (s,t) \in [0,L] \times [0,T].
\end{align}
Finally, \eqref{weakzero} together with \eqref{tzero} implies \eqref{algconst}.  
\end{proof}

%%%%%%%%%%%%%%%%%%%%%%%%%%%%%%%%%%%%%%%%%%%%%%%%%%%%%%%%%%%%
\setcounter{equation}{0} 
\setcounter{figure}{0} 
\section{Numerical study}\label{sec:4}

In this section we present exemplary simulations of elastic fiber motions. The numerical results regarding convergence, fiber elongation and longtime behavior coincide well with the previous analytical investigations.

%%%%%%%%%%
\subsection{Spatial finite element discretization}
On every time level $t_{k+1}$, the semi-discretized fiber system \eqref{eq:2} corresponds to a constrained minimization problem in a Hilbert space setting. We solve the associated adjoint problem (linear saddle point problem \eqref{eq:ad}) in a finite dimensional approximation space by choosing finite element spaces of piecewise cubic polynomials for the curve and Dirac distributions for the Lagrange multiplier. To facilitate the readability we suppress here the actual time index and indicate quantities associated to the spatial discretization by the subindex $_h$.

We use conforming finite element spaces $\mathbb{H}_h\subset \mathcal{H}^2(\Omega_L;\mathbb{R}^n)$ that are subordinated to a partition of $\overline{\Omega}_L=[0,L]$ into subintervals of length $h$. The partition is identified with the sequence of nodes $s_j=jh$, $j=0,...,M$, $M=L/h$. Certainly one can also think of different $h_j$, then the partition is assumed to satisfy $h=\max_{j} h_j\rightarrow 0$ as $M\rightarrow \infty$.  We span $\mathbb{H}_h$ by a node basis of cubic splines
\begin{align*}
\psi_j(s)&=\left\{ \begin{array}{l l l} 
		  (2+3x-x^3)/4, \hspace*{1.1cm} & x=(2s-(s_j+s_{j-1}))/h, & s\in [s_{j-1},s_j]\\
                  (2-3x+x^3)/4, & x=(2s-(s_{j+1}+s_{j}))/h, & s\in [s_{j},s_{j+1}]\\
		  0 & & else
                  \end{array}
	  \right.\\
\varphi_j(s)&=\left\{ \begin{array}{l l l}
		  h(-1-x+x^2+x^3)/8, & x=(2s-(s_j+s_{j-1}))/h, & s\in [s_{j-1},s_j]\\
                  h(\,\,\,1-x-x^2+x^3)/8, & x=(2s-(s_{j+1}+s_{j}))/h, & s\in [s_{j},s_{j+1}]\\
		  0 & & else
                  \end{array}
	  \right.,
\end{align*}
where we consider $s_{-1}=s_0$ and $s_{M+1}=s_M$ to simplify the notation. Then, any function $\mathbf{v}_h\in \mathbb{H}_h$ \begin{align*} 
\mathbf{v}_h(s)=\sum_{j=0}^M \mathsf{v}^\circ_j \psi_j(s) + \mathsf{v}'_j \varphi_j(s), 
\qquad \mathbf{v}_h \in \mathcal{C}^1(\Omega_L;\mathbb{R}^n)
\end{align*}
is represented by its coefficient tuple $\mathsf{v}=(\mathsf{v}^\circ_0,...\mathsf{v}^\circ_M,\mathsf{v}'_0,...\mathsf{v}'_M)^T\in \mathbb{R}^{2(M+1)n}$. In particular, $\mathsf{v}^\circ_j$ and $\mathsf{v}'_j\in \mathbb{R}^n$ describe the values of the function and its derivative at the node $s_j$, since $\psi_j(s_i)=\delta_{ij}$, $\partial_s \psi_j(s_i)=0$ and $\varphi_j(s_i)=0$, $\partial_s\varphi_j(s_i)=\delta_{ij}$ hold true. In the finite dimensional fiber space $\mathbb{V}_h\subset \mathbb{H}_h$, the degrees of freedom reduce to $2Mn$ because of the Dirichlet boundary conditions posed at $s=L$ that fix the coefficients $\mathsf{v}^\circ_M$ and $\mathsf{v}'_M$.  Piecewise polynomial functions cannot fulfill the arc-length constraint in the whole $\Omega_L$, unless they are globally affine. To allow for a fiber dynamics $\mathbf{v}_h\neq(\mathbf{r}_0)_h$ over time, we introduce a finite dimensional basis of Dirac distributions, i.e.\ $\eta_i(s)=\delta(s-\hat{s}_i)$, $i=1,...,\hat{M}$, for the approximation of the Lagrange multiplier $\lambda_h \in \mathbb{\hat H}_h \subset \mathcal{H}^{-1}(\Omega_L)$ and satisfy the constraint only at the respective points $\hat{s}_i$. These constraint points $\hat{s}_i$ are located with respect to the underlying partition. The total number of constraint points depends on the degrees of freedom and is a compromise between approximation quality and numerical realization, we set $\hat{M}=QM$ for a uniform distribution, $Q \in \mathbb{N}$. The intuitive choice are certainly the nodes (cf.\ \cite{bartels:p:2013}), yielding $(\mathsf{v}_j'-(\mathsf{r}_k)_j')\cdot (\mathsf{r}_k)_j'=0$, $j=0,...,M-1$, with $\mathsf{r}_k$ coefficient tuple associated with $(\mathbf{r}_k)_h$, here $Q=1$. But the constraint can be also imposed more than once per subinterval, for example at the nodes $s_j$ and the cell midpoints $s_j+h/2$, $j=0,...,M-1$ for $Q=2$. In the following we refer to these two variants as $Q=1$ and $Q=2$.

Given $\mathsf{r}_0$ and $\mathsf{r}_1$, the numerical scheme for the fiber dynamics requires then the sequential solving of linear systems of equations in $\mathbb{R}^{2Mn+\hat{M}}$ 
\begin{align}\label{eq:system}
\left( \begin{array}{c c} \Phi + {\tau^2}/{\mu^2} \,\Phi'' & \mathsf{B}^T_k \\ \mathsf{B}_k & 0 \end{array}\right) 
\left( \begin{array}{c}  \mathsf{v} \\ \mathsf{\lambda}\end{array}\right) = \left( \begin{array}{c c} \Phi (2\mathsf{r}_k-\mathsf{r}_{k-1}) + \mathsf{f} \\\mathsf{B}_k \mathsf{r}_k \end{array}\right), \quad \mathsf{r}_{k+1}=\mathsf{v}, \quad k=1,...,N-1 
\end{align} 
with the time-independent symmetric mass $\Phi$ and stiffness matrices $\Phi''$ that are associated with the spline basis, i.e.\ $(\Phi)_{pq}=\int \phi_q \,\phi_p \, ds$ and $(\Phi'')_{pq}=\int \partial_{ss} \phi_q\, \partial_{ss}\phi_p \, ds$, $\phi \in \{ \psi, \varphi \} $. The matrix $\mathsf{B}_k$ corresponds to the constraint conditions. The acting outer forces and the Dirichlet boundary conditions are incorporated in $\mathsf{f}$. In the stated dimensionless form that results from scaling with the fiber length $L$ and a typical velocity $V$, the ratio between inertial and bending effects $\mu=L/V\sqrt{\omega/b}$ characterizes the fiber behavior. The numerical realization is performed with MATLAB, Version R2014a, using the direct solvers.

%%%%%%%%%%
\subsection{Results and discussion}

As benchmark we consider the dynamics of a cantilever beam under gravity, cf.~\cite{baus:p:2015}. The set-up in the dimensionless form is particularly given by $\mathbf{e_g}=\mathbf{e_1}$, $\mathbf{f}=-\mathbf{e_3}/\mathrm{Fr}^2$,  $\mathrm{Fr}=1$ and $\mu=10$ with $\{\mathbf{e_1},\mathbf{e_2},\mathbf{e_3}\}$ Cartesian basis in $\mathbb{R}^3$ and $L=1$ due to the scaling. The dimensionless Froude number $\mathrm{Fr}$ represents the ratio of inertial and gravitational forces. Figure~\ref{fig:cantilever}(left) illustrates the fiber dynamics over time $[0,T]$ for $T=2.5$, for this purpose the fiber curve is illustrated at depicted time points. The computation is performed with $\tau=h=2\cdot10^{-2}$, but even much coarser discretizations yield the same qualitative behavior. The fiber elongation $\Delta L^\tau(t)=\int |\partial_s \mathbf{r}^\tau(t)|ds-L\geq 0$ that is originated in Lemma~\ref{le:increase} reduces for smaller time steps,  $\Delta L^\tau  \rightarrow 0$ for $\tau \rightarrow 0$. For the clamped boundary conditions we observe $\Delta L^\tau \sim \mathcal{O}(\tau)$ in consistence to the investigations in \cite{bartels:p:2013}. In contrast to a non-inertial frictional elastic flow (first order in time) where the elongation is bounded by the initial conditions \cite{bartels:p:2013}, the error bound (Proposition~\ref{estimateconstraint}) depends here crucially on the acting forces $\|\mathbf{f}\|_{\mathcal{L}^2(\Omega)}$ and the end time $T$. Figure~\ref{fig:cantilever}(right) shows the respective longtime behavior of $\Delta L_h^\tau(t)$, $t\in[0,2.5]$ for fixed $h=2\cdot 10^{-2}$ and varying $\tau$. The occurring integrals over $\Omega_L$ are evaluated on basis of the finite element basis by help of a Simpson quadrature rule (error tolerance $\mathrm{tol}=10^{-12}$). 

\begin{figure}[t]
\includegraphics[width=0.475\textwidth]{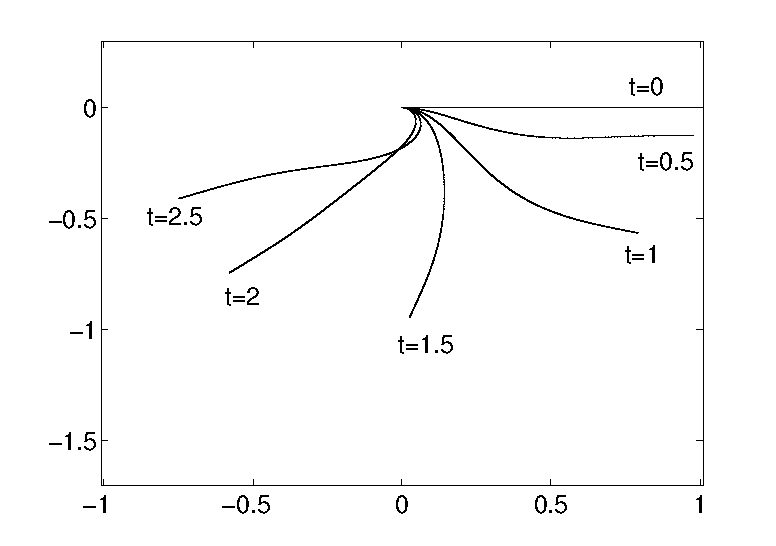}\hfill 
\includegraphics[width=0.475\textwidth]{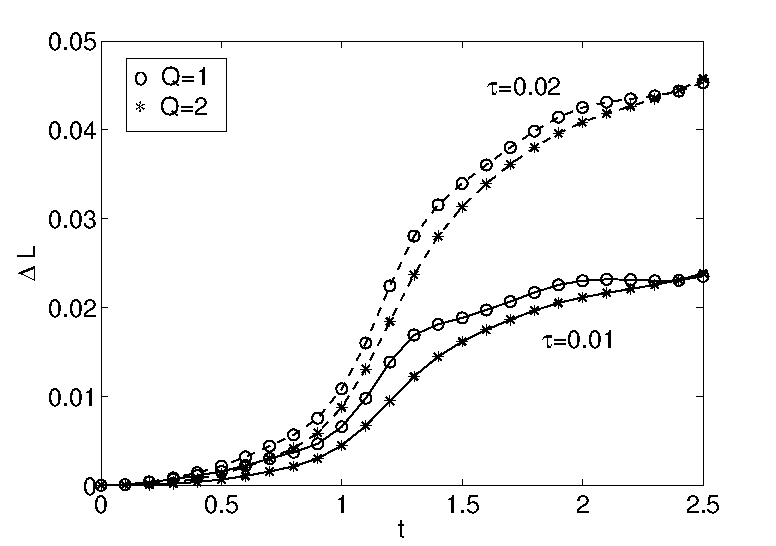}\\
\caption{\label{fig:cantilever} Benchmark test. \emph{Left:} Dynamics of a cantilever beam under gravity. ($\tau=h=0.02$, $Q=1$). \emph{Right:} Elongation over time for $h=0.02$ and varying $\tau$, $Q$.}
\end{figure}

\begin{figure}[b]
\includegraphics[width=0.475\textwidth]{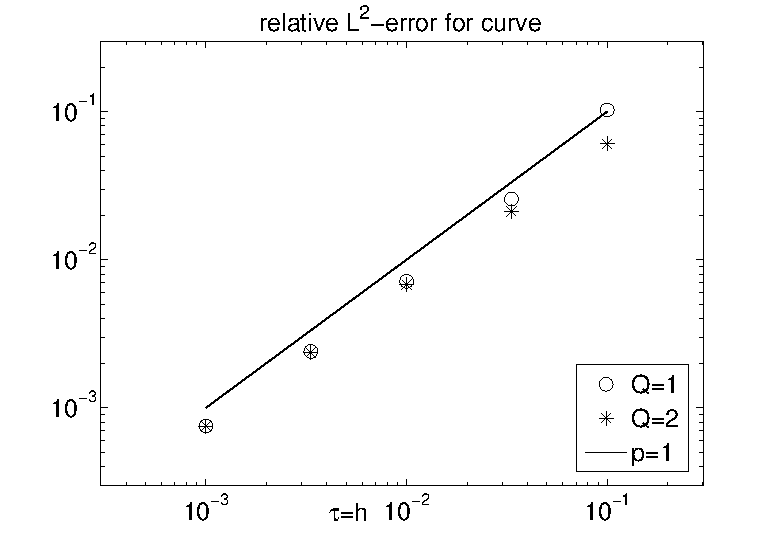}\hfill 
\includegraphics[width=0.475\textwidth]{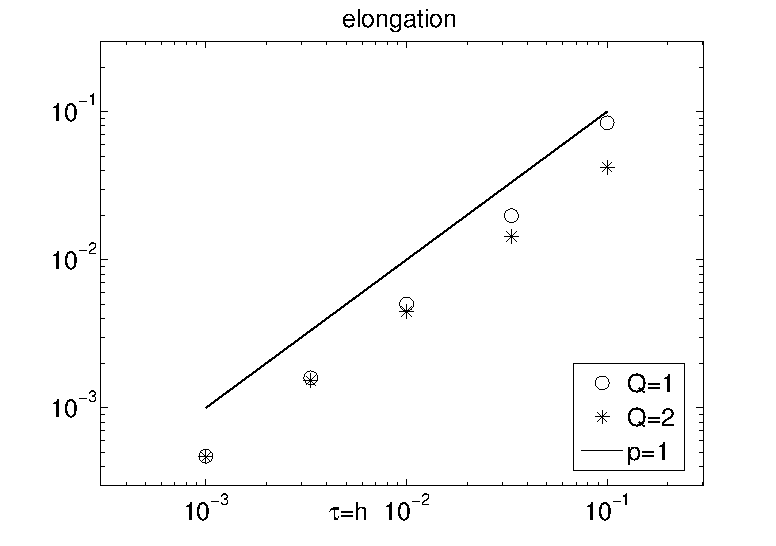}\\
\includegraphics[width=0.475\textwidth]{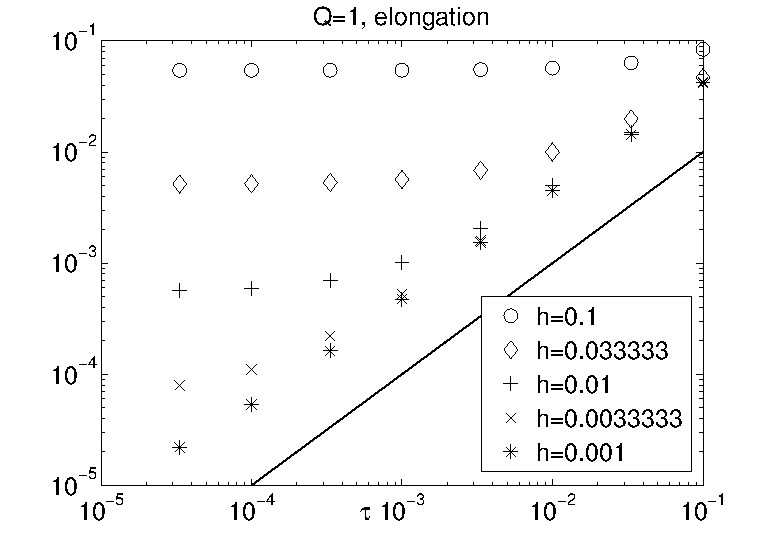}\hfill
\includegraphics[width=0.475\textwidth]{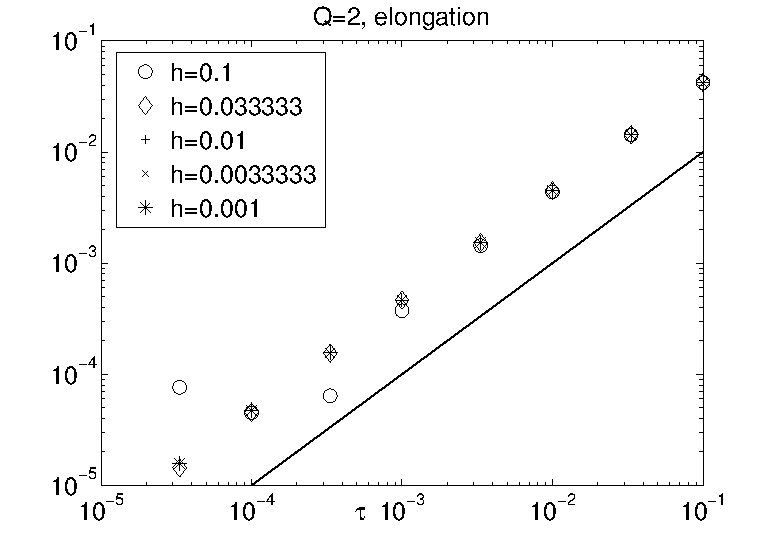}
\caption{\label{fig:cantilever_conv} Benchmark test. \emph{Top:} Space-time convergence for fiber position \emph{(left)} and elongation \emph{(right)} at $t=1$ for $Q=1,2$ as $\tau=h\rightarrow 0$. \emph{Bottom:}  Elongation $\Delta L_h^\tau(t=1)$ for different fixed $h$ for $Q=1$ (\emph{left}) and  $Q=2$ (\emph{right}) as $\tau\rightarrow 0$. The solid line indicates the convergence order $p=1$.}
\end{figure}

The convergence results that are visualized in Fig.~\ref{fig:cantilever_conv} refer to the exemplary time point $t=1$.  The numerical convergence rate of first order in space-time confirms the theory: the relative $\mathcal{L}^2(\Omega_L)$-error for the fiber position $\|(\mathbf{r}_{h^\star}^{\tau^\star}-\mathbf{r}_h^\tau)/\mathbf{r}_{h^\star}^{\tau^\star}\,(t)\|_{\mathcal{L}^2(\Omega_L)}$ is linear as $\tau=h\rightarrow 0$, as reference solution we use here an approximation associated with a sufficiently fine discretization, $(\tau^\star,h^\star)=\overline{3}\cdot(10^{-6},10^{-4})$. The same is found for the elongation $\Delta L_h^\tau$, Fig.~\ref{fig:cantilever_conv}(top). The actual magnitude of the deviation is affected by the finite dimensional approximation of the constraint. It turns out that imposing the constraint not only at the nodes ($Q=1$) but also at the cell midpoints ($Q=2$) yields a much better length preservation for coarser discretizations. Figure~\ref{fig:cantilever_conv}(bottom) shows the influence of $Q=1$ and $Q=2$ on $\Delta L_h^\tau(t=1)$ for different fixed spatial discretizations $h$ as $\tau\rightarrow 0$. For $Q=1$ we clearly see the linear decay that turns into a constant as $\tau\rightarrow0$, these constants depend on $h$ and represent the respective spatial errors. For $Q=2$ the spatial errors are much smaller. For example, $\Delta L_{h}^\tau \sim \mathcal{O}(10^{-4})$, $\tau \rightarrow 0$ requires only $h=10^{-1}$ for $Q=2$ in contrast to $h=\overline{3}\cdot 10^{-3}$ for $Q=1$. This accuracy goes with smaller linear systems \eqref{eq:system} (in $\mathbb{R}^{80}$ for $Q=2$ versus $\mathbb{R}^{21000}$ for $Q=1$ wrt.\ $n=3$) and hence with significant less computational effort.

We conclude the numerical experiments with the simulation of a cantilever beam under gravity and an additional time-dependent periodic transversal force which causes a fully three-dimensional motion, i.e.\ $\mathbf{e_g}=\mathbf{e_1}$, $\mathbf{f}(t)=-\mathbf{e_3}/\mathrm{Fr}^2-\mathbf{e_2} \cos(2\pi t)/\mathrm{Dr}^2$,  $\mathrm{Fr}=1$. The dimensionless parameter $\mathrm{Dr}$ represents the ratio of the inertial and transversal outer forces. Figure~\ref{fig:dynamics} illustrates the effect of $\mu$ and $\mathrm{Dr}$ on the fiber behavior: larger $\mu$ imply a smaller bending stiffness and hence more curvature, smaller $\mathrm{Dr}$ yield more pronounced oscillations out off the $\mathbf{e_1}$-$\mathbf{e_3}$-plane. The respective computations for $t\in [0,T]$, $T=2.5$ are performed with $\tau=10^{-3}$, $h=2\cdot 10^{-2}$ and $Q=2$, the elongation satisfies $\Delta L_h^\tau(T)\leq 10^{-2}$ in all cases. 
\begin{figure}[t]
\hspace*{-0.5cm}
\includegraphics[width=0.365\textwidth]{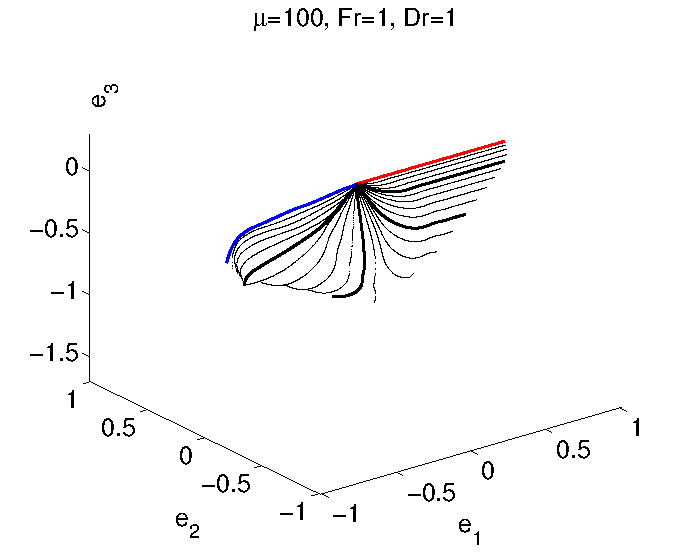} \hspace*{-0.8cm}
\includegraphics[width=0.365\textwidth]{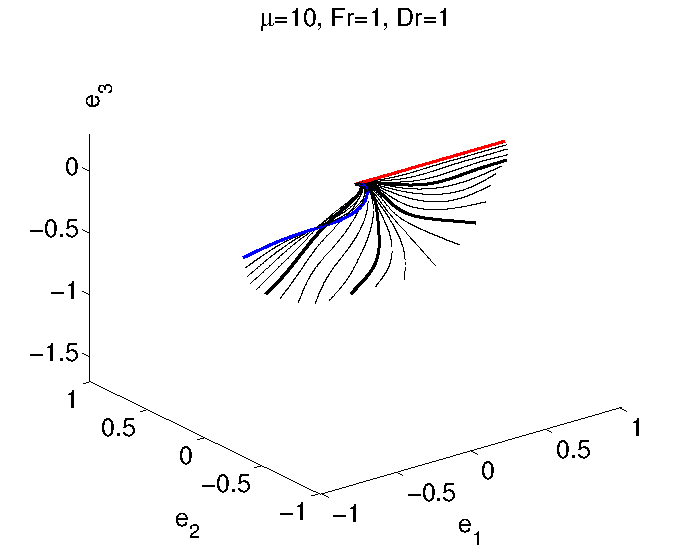} \hspace*{-0.8cm}
\includegraphics[width=0.365\textwidth]{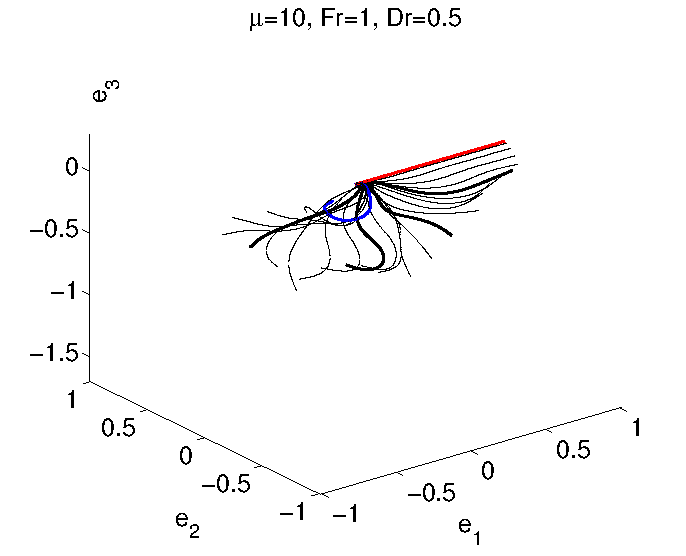}
\caption{\label{fig:dynamics} Dynamics of a cantilever beam under gravity and a periodic transversal force with $\mathrm{Fr}=1$ and varying $(\mu, \mathrm{Dr})$, $t\in [0, 2.5]$. The initial fiber position is visualized by the red line, the final one by the blue line. The remaining lines indicate the motion wrt.\ time steps of size $\Delta t=0.1$. See Fig.~\ref{fig:cantilever}(top, left) for $\mathrm{Dr}\rightarrow \infty$. ($\tau=0.001$, $h=0.02$, $Q=2$).}
\end{figure}

Note that in the implementation the numerical scheme can be easily extended to cover also fiber motions that are driven by curve-dependent outer forces $\mathbf{f}[\mathbf{r},\partial_t\mathbf{r},\partial_s\mathbf{r},s,t]$. When dealing with non-linear forces, it is advantageous to incorporate the linearized constraint in the used fixed point iteration (e.g.\ Newton method) since it improves the accuracy while the expenses are neutral.

%%%%%%%%%%%%%%%%%%%%%%%%%%%%%%%%%%%%%%%%%%%%%%%%%%%%%%%%%%%
\section{Conclusion}

In the technical textile industry the dynamics of an elastic inextensible inertial fiber is modeled by a wavelike, nonlinear fourth order partial differential algebraic system. In this paper we proposed a numerical scheme focusing on the efficient and accurate treatment of the constraint for the local length preservation. A convergence proof and an explicit error bound were presented. Ongoing work deals with the extension of analysis and numerics to the stochastic partial differential algebraic system \cite{marheineke:p:2011} arising for fibers immersed in turbulent air flows. Here, a stochastic force (source term) of a white noise type is added in the model system. The challenge lies again in the handling of the constraint. So far, the corresponding extensible beam equations with additive Gaussian noise have been studied in \cite{baur:p:2013}.

\quad\\
{\sc Acknowledgments} This work has been supported by Bundesministerium f\"ur Bildung und For\-schung, Schwerpunkt "Mathematik f\"ur Innovationen in Industrie and Dienstleistungen", Projekt 05M10, 05M13. We are grateful to an unknown referee
for valuable references from which we got important ideas to treat the algebraic constraint.

%%%%%%%%%%%%%%%%%%%%%%%%%%%%%%%%%%%%%%%%%%%%%%%%%%%%%%%%%%%
\bibliographystyle{siam}
\bibliography{ref_2013}

\begin{thebibliography}{10}

\bibitem{adams:b:1990}
{\sc R.~A. Adams}, {\em Sobolev Spaces}, Academic Press, Boston, 1990.

\bibitem{antman:b:2006}
{\sc S.~S. Antman}, {\em Nonlinear Problems of Elasticity}, Springer, New York,
  2006.

\bibitem{barrett:p:2011}
{\sc J.~Barrett, H.~Garcke, and R.~N\"urnberg}, {\em The approximation of
  planar curve evolutions by stable fully implicit finite element schemes that
  equidistribute}, Num. Meth. Partial Diff. Eqs., 27 (2011), pp.~1--30.

\bibitem{bartels:p:2013}
{\sc S.~Bartels}, {\em A simple scheme for the approximation of the elastic
  flow of inextensible curves}, IMA J. Numer. Anal., 33 (2013), pp.~1115--1125.

\bibitem{baur:p:2013}
{\sc B.~Baur, M.~Grothaus, and T.~Thanh~Mai}, {\em Analytically weak solutions
  to linear {SPDE}s with unbounded time-dependent differential operators and an
  application}, Commun. Stoch. Anal., 7 (2013), pp.~551--571.

\bibitem{baus:p:2015}
{\sc F.~Baus, A.~Klar, N.~Marheineke, and R.~Wegener}, {\em
  Low-{M}ach-number--slenderness limit for elastic rods}, SIAM J. Appl. Math.,
  (2015, to appear).

\bibitem{berezansky:b:1995}
{\sc Y.~M. Berezansky and Y.~G. Kondratiev}, {\em Spectral Methods in
  Infinite-Dimensional Analysis}, vol.~12/1 of Mathematical Physics and Applied
  Mathematics, Kluwer Academic Publishers, Dordrecht, 1995.
\newblock Translated from the 1988 Russian original by P.V.~Malyshev and
  D.V.~Malyshev and revised by the authors.

\bibitem{bertails:p:2006}
{\sc F.~Bertails, B.~Audoly, M.~Cani, B.~Querleux, F.~Leroy, and
  J.~L\'ev\'eque}, {\em Super-helices for predicting the dynamics of natural
  hair}, ACM Transaction Graphics, 25 (2006), pp.~1180--1187.

\bibitem{bonilla:p:2007}
{\sc L.~L. Bonilla, T.~G{\"o}tz, A.~Klar, N.~Marheineke, and R.~Wegener}, {\em
  Hydrodynamic limit for the {F}okker-{P}lanck equation describing fiber
  lay-down models}, SIAM J. Appl. Math., 68 (2007), pp.~648--665.

\bibitem{brzezniak:p:2005}
{\sc Z.~Brze{\'z}niak, B.~Maslowski, and J.~Seidler}, {\em Stochastic nonlinear
  beam equations}, Probab. Theory Related Fields, 132 (2005), pp.~119--149.

\bibitem{coleman:p:1993}
{\sc B.~D. Coleman, E.~H. Dill, M.~Lembo, Z.~Lu, and I.~Tobias}, {\em On the
  dynamics of rods in the theory of {K}irchhoff and {C}lebsch}, Arch. Rat.
  Mech. Anal., 121 (1993), pp.~339--359.

\bibitem{deckelnick:p:2009}
{\sc K.~Deckelnick and G.~Dziuk}, {\em Error analysis for the elastic flow of
  parameterized curves}, Math. Comp., 78 (2009), pp.~645--671.

\bibitem{dziuk:p:2002}
{\sc G.~Dziuk, E.~Kuwert, and R.~Sch\"atzle}, {\em Evolution of elastic curves
  in $\mathbb{R}^n$: {E}xistence and computation}, SIAM J. Math. Anal., 33
  (2002), pp.~1228--1245.

\bibitem{grothaus:p:2008}
{\sc M.~Grothaus and A.~Klar}, {\em Ergodicity and rate of convergence for a
  non-sectorial fiber lay-down process}, SIAM J. Math. Anal., 40 (2008),
  pp.~968--983.

\bibitem{hinze:b:2009}
{\sc M.~Hinze, R.~Pinnau, M.~Ulbrich, and S.~Ulbrich}, {\em Optimization with
  {PDE} Constraints}, vol.~23 of Mathematical Modelling: {T}heory and
  Applications, Springer, New York, 2009.

\bibitem{juengel:p:2001}
{\sc A.~J\"ungel and R.~Pinnau}, {\em A positivity-preserving numerical scheme
  for a nonlinear fourth order parabolic system}, SIAM J. Num. Anal., 39
  (2001), pp.~385--406.

\bibitem{klar:p:2009}
{\sc A.~Klar, N.~Marheineke, and R.~Wegener}, {\em Hierarchy of mathematical
  models for production processes of technical textiles}, ZAMM - J. Appl. Math.
  Mech., 89 (2009), pp.~941--961.

\bibitem{landau:b:1970}
{\sc L.~D. Landau and E.~M. Lifschitz}, {\em Theory of Elasticity}, vol.~VII of
  A Course of Theoretical Physics, Pergamom Press, Oxford, 1970.

\bibitem{marheineke:p:2006}
{\sc N.~Marheineke and R.~Wegener}, {\em Fiber dynamics in turbulent flows:
  {G}eneral modeling framework}, SIAM J. Appl. Math., 66 (2006),
  pp.~1703--1726.

\bibitem{marheineke:p:2007}
\leavevmode\vrule height 2pt depth -1.6pt width 23pt, {\em Fiber dynamics in
  turbulent flows: {S}pecific {T}aylor drag}, SIAM J. Appl. Math., 68 (2007),
  pp.~1--23.

\bibitem{marheineke:p:2011}
\leavevmode\vrule height 2pt depth -1.6pt width 23pt, {\em Modeling and
  application of a stochastic drag for fiber dynamics in turbulent flows}, Int.
  J. Multiphase Flow, 37 (2011), pp.~136--148.

\bibitem{mesirov:b:1996}
{\sc J.~P. Mesirov, K.~Schulten, and D.~W. Sumners}, eds., {\em Mathematical
  Approaches to Biomolecular Structure and Dynamics}, Springer, New York, 1996.

\bibitem{mora:p:2003}
{\sc M.~G. Mora and S.~M\"uller}, {\em Derivation of the nonlinear
  bending-torsion theory for inextensible rods by {$\Gamma$}-convergence},
  Calc. Var. Partial. Diff. Eqs., 18 (2003), pp.~287--305.

\bibitem{oelz:p:2011}
{\sc D.~B. \"Olz}, {\em On the curve straightening flow of inextensible, open,
  planar curves}, S$\vec{e}$MA J., 54 (2011), pp.~5--24.

\bibitem{pearson:b:1985}
{\sc J.~R.~A. Pearson}, {\em Mechanics of Polymer Processing}, Elsevier, New
  York, 1985.

\bibitem{reed:b:1980}
{\sc M.~Reed and B.~Simon}, {\em Functional Analysis}, vol.~I of Methods of
  Modern Mathematical Physics, Academic Press, New York, 2~ed., 1980.

\bibitem{troeltzsch:b:2010}
{\sc F.~Tr\"oltzsch}, {\em Optimal Control of Partial Differential Equations --
  Theory, Methods and Applications}, vol.~112 of Graduate Studies in
  Mathematics, American Mathematical Society, Providence, Rhode Island, 2010.

\end{thebibliography}

\end{document}